\begin{document}
   
\newcommand{\R}{\mathbb R}
\newcommand{\Z}{\mathbb Z}
\newcommand{\E}{\mathbb E}
\newcommand{\1}{\mathbbm 1}
\newcommand{\N }{\mathbb N}
\newcommand{\Q }{\mathbb Q}
\newcommand{\q }{\mathsf Q}
\newcommand{\p }{\mathsf P}
\newcommand{\F }{\mathcal F}
\newcommand{\G }{\mathcal G}
\newcommand{\D }{\mathcal D}
\newcommand{\h }{\mathcal H}
\newcommand{\M }{\mathcal M}
\newcommand{\eps}{\varepsilon}
\newcommand{\argmax}{\operatornamewithlimits{argmax}}
\newcommand{\com}[1]{{\color{blue} \textbf{[#1]}}}

\newcommand{\nc}{\newcommand}
\nc{\bg}{\begin} \nc{\e}{\end} \nc{\bi}{\begin{itemize}} \nc{\ei}{\end{itemize}} \nc{\be}{\begin{enumerate}} \nc{\ee}{\end{enumerate}} \nc{\bc}{\begin{center}} \nc{\ec}{\end{center}} \nc{\bq}{\begin{equation}} \nc{\eq}{\end{equation}} \nc{\la}{\label} \nc{\fn}{\footnote} \nc{\bs}{\backslash} \nc{\ul}{\underline} \nc{\ol}{\overline} \nc{\np}{\newpage} \nc{\btab}{\begin{tabular}} \nc{\etab}{\end{tabular}} \nc{\mc}{\multicolumn} \nc{\mr}{\multirow} \nc{\cdp}{\cleardoublepage} \nc{\fns}{\footnotesize} \nc{\scs}{\scriptsize} \nc{\RA}{\Rightarrow} \nc{\ra}{\rightarrow} \nc{\rc}{\renewcommand} \nc{\bfig}{\begin{figure}} \nc{\efig}{\end{figure}} \nc{\can}{\citeasnoun} \nc{\vp}{\vspace} \nc{\hp}{\hspace}\nc{\LRA}{\Leftrightarrow}\nc{\LA}{\Leftarrow}\nc{\rr}{\q}

\renewcommand{\tilde}{\widetilde}

\nc{\ch}{\chapter}
\nc{\s}{\section}
\nc{\subs}{\subsection}
\nc{\subss}{\subsubsection}

\newtheorem{thm}{Theorem}[section]
\newtheorem{prop}[thm]{Proposition}
\newtheorem{cor}[thm]{Corollary}
\newtheorem{lem}[thm]{Lemma}

\theoremstyle{remark}
\newtheorem{rem}[thm]{Remark}

\theoremstyle{remark}
\newtheorem{ex}[thm]{Example}

\theoremstyle{remark}
\newtheorem{cex}[thm]{Counterexample}

\theoremstyle{definition}
\newtheorem{df}[thm]{Definition}

\newenvironment{rcases}{
  \left.\renewcommand*\lbrace.
  \begin{cases}}
{\end{cases}\right\rbrace}

\setlength\parindent{0pt}
\pagestyle{plain}

 \author{D\"orte Kreher}
   \address{Humboldt-Universit\"at zu Berlin, Germany}
   \email{kreher@math.hu-berlin.de}

  \keywords{Random times, change of measure, progressive enlargement of filtrations, NFLVR}

  \thanks{Financial support from {\it SNF grant 137652} and {\it CRC 649: Economic Risk} is gratefully acknowledged. The author thanks Ashkan Nikeghbali, Monique Jeanblanc, Philip Protter, and an anonymous referee for their careful reading of the manuscript and for their helpful comments and remarks.} 

\title{Change of measure up to a random time: Details}
\date{\today}

\maketitle

\begin{abstract}
This paper extends results of Mortimer and Williams (1991) about changes of probability measure up to a random time under the assumptions that all martingales are continuous and that the random time avoids stopping times.
We consider locally absolutely continuous measure changes up to a random time, changes of probability measure up to and after an honest time, and changes of probability measure up to a pseudo-stopping time. Moreover, we apply our results to construct a change of probability measure that is equivalent to the enlargement formula and to build for a certain class of pseudo-stopping times a class of measure changes that preserve the pseudo-stopping time property.
Furthermore, we study for a price process modeled by a continuous semimartingale the stability of the No Free Lunch with Vanishing Risk (NFLVR) property up to a random time, that avoids stopping times, in the progressively enlarged filtration and provide sufficient conditions for this stability in terms of the Az\'ema supermartingale. 
\end{abstract}

\s{Introduction}

Motivated by models from physics and chemistry Mortimer and Williams (1991) study how to perform a change of measure up to a random time $\sigma$ on a filtered probability space $(\Omega,\F,(\F_t),\p)$. More precisely, in their paper titled "Change of measure up to a random time: Theory" they derive the semimartingale decomposition of continuous $(\p,\F_t)$-martingales up to time $\sigma$ in the progressively enlarged filtration
\[\G'_t=\F_t\vee\sigma\left(\1_{\{\sigma>s\}};s\leq t\right)\]
under an equivalent probability measure $\q$ and they give the expression of the $(\q,\G'_t)$-hazard function of $\sigma$. To prove their result they use elementary methods and do no rely on the theory of enlargement of filtrations. Besides, Mortimer and Williams (1991) claim in their paper that \enquote{it is the \textit{examples} which make this topic of some interest}, but the only two examples they provide deal with the well-known path decomposition of the standard Brownian motion.

In this paper we extend their result in several ways and provide interesting classes of examples working under the standing assumptions that $\sigma$ avoids stopping times and that all $(\F_t)$-martingales are continuous. As in Mortimer and Williams (1991)\nocite{Mortimer} we do no rely on any deep results from the theory of enlargements of filtrations, but choose a rather direct approach using only elementary methods to prove our results. 

First, we extend their result to locally absolutely continuous changes of measure up to a random time, which allows us to construct a  change of probability measure that is equivalent to the enlargement formula up to time $\sigma$. Second, we study changes of probability measure for honest times. Honest times are known to be well-suited for a progressive enlargement of filtration since the seminal work of Barlow (1978)\nocite{Barlow}, because in this case all $(\F_t)$-semimartingales remain semimartingales in the enlarged filtration on the whole time horizon. Therefore, if $\sigma$ is an honest time, we are able to extend the Girsanov-type theorem from Mortimer and Williams (1991) after time $\sigma$. While the result itself is not very surprising, the way we prove it is interesting because as in Mortimer and Williams (1991) we do not assume any prior knowledge of the theory of enlargements of filtrations. Actually, as it turns out there is a nice link to so called relative martingales, which were studied by Az\'ema, Meyer, and Yor (1992). 
Third, we study changes of measure up to pseudo-stopping times which were introduced by Nikeghbali and Yor (2005). 
As finite-valued honest times are ends of optional sets, their definition is independent of the underlying probability measure. This is however not true for pseudo-stopping times. An interesting and challenging problem is therefore to analyze the stability of the pseudo-time property under different probability measures. For a special class of pseudo-stopping times we are able to provide a class of equivalent probability measures which preserve the pseudo-stopping time property. 
Finally, we also generalize the example of the Brownian path decomposition given in Mortimer and Williams (1991). As opposed to Mortimer and Williams (1991), who provide a Markovian study of this example, our analysis is based on semimartingale calculus only and uses the specific structure of the Az\'ema supermartingale of a class of pseudo-stopping times associated with honest times. 

The last part of the paper deals with the question of no arbitrage up to a random time. Since the technique of progressively enlarging a filtration with a random time is a standard tool in mathematical finance to model credit risk and insider trading, this question is of particular interest. In the recent literature it is addressed in a couple of papers under different assumptions on the price process, the random time, and the precise no arbitrage concept, cf.~Fontana, Jeanblanc, and Song (2013); Acciaio, Fontana, and Kardaras (2016); Aksamit, Choulli, Deng, and Jeanblanc (2014). To be concrete, we deal with the following question assuming that the stock price process is modeled by a continous $(\F_t)$-semimartingale: if the market satisfies the condition of ``no free lunch with vanishing risk'' (NFLVR) with respect to the filtration $(\F_t)$, under which conditions does the market then also satisfy NFLVR with respect to the progressively enlarged filtration until time $\sigma$? For example, it is known that honest times allow for arbitrage on the time horizon $[0,\sigma]$ in the progressively enlarged filtration, cf.~Fontana, Jeanblanc, and Song (2013). In this paper we consider an arbitrary random time $\sigma$ that avoids $(\F_t)$-stopping times and derive sufficient criteria for the validity of NFLVR up to time $\sigma$, assuming that all $(\F_t)$-martingales are continuous. To do this we choose two different approaches: in the first one we work directly with the definition of NFLVR, while in the second one we identify a local martingale deflator in the enlarged filtration and check under what conditions it is a uniformly integrable martingale, which would imply that NFVLR is satisfied up to time $\sigma$ due to the fundamental theorem of asset pricing. 

This paper is organized as follows: In the next section after having introduced the general setup and notation, we recall the result from \cite{Mortimer} and give some first corollaries thereof. In Subsection \ref{mult} we moreover introduce the multiplicative decomposition of the Az\'ema supermartingale, which will be used frequently during the paper. Section \ref{abs} deals with locally absolutely continuous measure changes. Afterwards we specialize our analysis to the class of honest times in Section \ref{hon} and to the class of pseudo-stopping times in Section \ref{pst}. Section \ref{generalize} contains a generalization of the example given in \cite{Mortimer}. Finally, in Section \ref{finance} we consider the question of no arbitrage up to a random time and provide sufficient criteria for the validity of NFLVR in the progressively enlarged filtration up to time $\sigma$.

\s{General theory}\label{general}

\subs{Setup and notation}\label{notations}

Throughout the paper we work on a filtered probability space $(\Omega,\F,(\F_t)_{t\geq0},\p)$, where 
$(\F_t)$ is assumed to satisfy the natural conditions, i.e.~$(\F_t)$ is right-continuous and $\F_0$ contains all $\sigma$-negligible sets. Here, a subset $A\subset\Omega$ is called $\sigma$-negligible if there exists a sequence $(B_n)_{n\geq0}$ of subsets of $\Omega$ such that $A\subset\bigcup_{n\in\N}B_n$ and such that for all $n\in\N$, $B_n\in\F_n$ with $\p(B_n)=0$. It was shown in \cite{newkind} that under the natural conditions any martingale admits a c\`adl\`ag modification and we will always work with this modification in the following. If $(X_t)$ is a real-valued stochastic process we denote by 
\[\ol{X}_t:=\sup_{s\leq t}X_s\qquad\text{ and }\qquad\ul{X}_t:=\inf_{s\leq t}X_s,\qquad t\geq0, \]
its supremum resp.~infimum process and by $T^X_a=\inf\{t>0:\ X_t=a\}$ the first hitting time of the level $a\in\R$. Note that under the natural assumptions $T^X_a$ is a stopping time, if $X$ is a right-continuous adapted process, cf.~\cite{newkind}. Furthermore, $\mathcal{M}(\p,\F_t)$ denotes the set of $(\p,\F_t)$-martingales and $\mathcal{M}_{loc}(\p,\F_t)$ resp.~$\mathcal{M}_{u.i.}(\p,\F_t)$ the set of local resp.~uniformly integrable $(\p,\F_t)$-martingales.

Finally, we denote by $\sigma:\ \Omega\ra[0,\infty]$ we denote an $\F$-measurable random time, which gives rise to the progressively enlarged filtration
\[\G_t:=\bigcap_{s>t}\left(\F_s\vee\sigma(\1_{\{\sigma>r\}};r\leq s)\right).\]

\textbf{Throughout the paper we will assume that the following two assumptions are satisfied:}\\

{\bf (A)} $\sigma$ avoids any $(\F_t)$-stopping time: $\p(\sigma=T)=0$ for any $(\F_t)$-stopping time $T$.\\ 
{\bf (C)} All $(\F_t)$-martingales are continuous.\\

We denote by $Z^\p_t:=\p(\sigma>t|\F_t)$ the Az\'ema supermartingale of $\sigma$. It decomposes as $Z_t^\p=m_t^\p-A_t^\p$ with $m_t^\p=\E^\p(A^\p_\infty|\F_t)$ being a uniformly integrable martingale and $(A_t^\p)$ being the $(\F_t)$-dual optional projection of the process $(\1_{\{\sigma\leq t\}})_{t\geq0}$. We note that under assumption (C), $A^\p$ is also the dual predictable projection of $(\1_{\{\sigma\leq t\}})_{t\geq0}$, since in this case the predictable and optional sigma fields related to $(\F_t)$ coincide. Moreover, note that under the assumptions (AC) the Az\'ema supermartingale is continuous and $Z_t^\p=m_t^\p-A_t^\p$ is thus its Doob-Meyer decomposition. For the definitions and properties of dual optional and dual predictable projections we refer the reader to chapter VI.2 of \cite{DM2}.\\

Let $\rho$ be a non-negative $\F$-measurable random variable with expectation one. Then $\q:=\rho.\p$ defines a new probability measure which is absolutely continuous to $\p$. We denote by $(\rho_t)$ resp.~$(\tilde{\rho}_t)$ the optional projection of $\rho$ on $(\F_t)$ resp.~$(\G_t)$ satisfying for all $t\geq0$,
\[\rho_t:=\E^\p(\rho|\F_t),\quad \tilde{\rho}_t:=\E^\p(\rho|\G_t),\]
where $(\tilde{\rho}_t)$ is chosen to be c\`adl\`ag and $(\rho_t)$ is continuous due to (C). Furthermore, we define the $(\p,\F_t)$-supermartingale
\[h_t:=\E^\p(\rho\1_{\{\sigma>t\}}|\F_t),\quad t\geq0.\]
By Bayes' formula one has
\[h_t=\rho_t\cdot\q(\sigma>t|\F_t)=:\rho_tZ_t^\q.\]
Since $\sigma$ avoids stopping times, $\p(\sigma=\infty)=0$ and $\sigma$ is finite $\p$-almost surely. Therefore, $Z^\p$ and $h$ both converge towards zero almost surely as $t\ra\infty$.\\
If $h$ is strictly positive, we denote by $\mu$ the stochastic logarithm of $h$, i.e.~$h_t=\mathcal{E}(\mu)_t$. The process $\mu$ is again a $(\p,\F_t)$-supermartingale with Doob-Meyer decomposition $\mu=\mu^L-\mu^F$, where $\mu^L\in\mathcal{M}_{loc}(\p,\F_t)$ and $\mu^F$ is increasing. Moreover, $h,\mu,\mu^L$, and $\mu^F$ are all continuous. If $h$ is not strictly positive, then the process $\mu$ and hence also $\mu^L$ and $\mu^F$ are only well-defined on the stochastic interval $[0,T_0^h)$. 

\subs{Girsanov-type theorems}

We are now ready to recall the result of \cite{Mortimer}, Lemma 2.

\begin{thm}\label{original}
Assume that $h$ is strictly positive and let $U=(U_t)_{t\geq0}$ be a local $(\p,\F_t)$-martingale. Then the process $\left(\1_{\{\sigma>t\}}V_t\exp(\mu^F_t)\right)_{t\geq0}$ is a local $(\q,\G_t)$-martingale, where $V:=U-\langle U,\mu\rangle$.\\ Moreover, the process $\left(\mu^F_{t\wedge \sigma}\right)_{t\geq0}$ is the $(\q,\G_t)$-dual predictable projection of $(\1_{\{\sigma\leq t\}})_{t\geq0}$.
\end{thm}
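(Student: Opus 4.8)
The plan is to verify the two assertions by reducing everything to computations with optional projections onto $(\G_t)$ and Bayes' formula, exploiting the key identity $h_t=\rho_t Z_t^\q$ together with assumptions $(AC)$. First I would recall the standard structure of $(\G_t)$-optional processes before time $\sigma$: any $(\G_t)$-optional process stopped at $\sigma$ agrees on $\{\sigma>t\}$ with $\1_{\{\sigma>t\}} Y_t/h_t$ for a suitable $(\F_t)$-optional process $Y$, and conversely for an $(\F_t)$-optional (or progressive) process $K$ one has the projection formula $\E^\p(K_t\1_{\{\sigma>t\}}\mid\F_t)=$ the $(\F_t)$-optional projection of $K_t h_t$ in the appropriate sense; more precisely, under $\q$ the process $\1_{\{\sigma>t\}} N_t/h_t$ is a $(\q,\G_t)$-(local) martingale whenever $N$ is a suitable $(\p,\F_t)$-(local) martingale. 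So the first step is to identify, for $U$ a local $(\p,\F_t)$-martingale, the right $(\p,\F_t)$-process $N$ such that $\1_{\{\sigma>t\}} V_t\exp(\mu^F_t) = \1_{\{\sigma>t\}} N_t/h_t$ up to time $\sigma$; since $h_t=\mathcal E(\mu)_t$ and $\mu=\mu^L-\mu^F$, we have $1/h_t = \mathcal E(-\mu^L + \mu^F + \langle\mu^L\rangle)_t$ by Yor's formula (using continuity from $(C)$), so $N_t = V_t\exp(\mu^F_t)h_t$ should be expressible via integration by parts in terms of $U$, $\mu^L$, $\mu^F$.

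The second step is the core computation: show that this $N$ is a local $(\p,\F_t)$-martingale. Writing $V=U-\langle U,\mu\rangle = U - \langle U,\mu^L\rangle$ (the bracket with the finite-variation part $\mu^F$ vanishes by $(C)$), I would apply integration by parts to $N_t = V_t \exp(\mu^F_t) h_t = V_t \exp(\mu^F_t)\mathcal E(\mu)_t$, and then substitute the dynamics $dh_t = h_{t-}\,d\mu_t = h_t(d\mu^L_t - d\mu^F_t)$ (continuity again). The terms $d\langle U,\mu^L\rangle$ coming from $dV$ and the $h\,d\mu^F$ terms are designed to cancel against the $+\exp(\mu^F_t)V_t h_t\,d\mu^F_t$ contribution and the cross-bracket $d\langle V, h\rangle = h_t\,d\langle U,\mu^L\rangle$, leaving $dN_t$ a sum of stochastic integrals against $U$ and $\mu^L$ — hence a local martingale. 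This is the step I expect to be the main obstacle: keeping track of all the finite-variation and bracket terms so that the cancellation is exact, and dealing with the localization (since $\mu$, and therefore $N$, is a priori only defined on $[0,T_0^h)$, one works up to a localizing sequence of stopping times $S_n \uparrow T_0^h$ and then notes that $\1_{\{\sigma>t\}}$ forces everything to live on $[0,T_0^h)$ because $h$ and $Z^\p$ vanish at infinity together).

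For the transfer from a $(\p,\F_t)$-local martingale $N$ to the $(\q,\G_t)$-local martingale $\1_{\{\sigma>t\}} N_t/h_t$, I would either invoke the already-stated machinery behind Theorem \ref{original} in the special case $U\equiv$ the relevant martingale, or give the direct argument: for $s<t$ and $\Gamma\in\G_s$, reduce $\Gamma$ to the form $(\Gamma_0\cap\{\sigma>s\})\cup(\text{part in }\{\sigma\le s\})$ with $\Gamma_0\in\F_s$, discard the second part since the process is supported on $\{\sigma>t\}\subseteq\{\sigma>s\}$, and compute $\E^\q\!\big(\1_{\Gamma_0}\1_{\{\sigma>t\}} N_t/h_t\big) = \E^\p\!\big(\rho_t\1_{\Gamma_0}\1_{\{\sigma>t\}} N_t/h_t\big)$, then condition on $\F_t$ using $\E^\p(\rho\1_{\{\sigma>t\}}\mid\F_t)=h_t$ to collapse it to $\E^\p(\1_{\Gamma_0} N_t) = \E^\p(\1_{\Gamma_0} N_s)$, and run the computation backwards at time $s$. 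Finally, for the statement about the $(\q,\G_t)$-dual predictable projection of $(\1_{\{\sigma\le t\}})$: by the already-established first part applied with $U\equiv 1$ (so $V\equiv 1$), the process $\1_{\{\sigma>t\}}\exp(\mu^F_{t\wedge\sigma})$ is a $(\q,\G_t)$-local martingale, hence $\exp(-\mu^F_{t\wedge\sigma})$ times a local martingale is $\1_{\{\sigma>t\}}$, i.e. $\1_{\{\sigma>t\}}$ admits the multiplicative decomposition $\mathcal E(-\mu^F_{\cdot\wedge\sigma})\cdot(\text{local mart.})$; taking stochastic logarithms, $\1_{\{\sigma\le t\}} - \mu^F_{t\wedge\sigma}$ is a $(\q,\G_t)$-local martingale, and since $\mu^F_{\cdot\wedge\sigma}$ is continuous (by $(C)$) and increasing it is predictable, so by uniqueness in the Doob–Meyer decomposition of the $(\q,\G_t)$-submartingale $\1_{\{\sigma\le t\}}$ it must coincide with its dual predictable projection. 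One should check integrability so that "local martingale" can be upgraded where needed, but that is routine given that $\mu^F_{\cdot\wedge\sigma}$ is the compensator of a bounded increasing process.
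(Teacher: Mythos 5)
Your proposal is correct, but there is nothing in the paper to compare it against: Theorem \ref{original} is stated as a quotation of Lemma 2 of Mortimer and Williams (1991) and the paper gives no proof of it. Your reconstruction is the natural one and the key steps all check out. The identity $h_t\exp(\mu^F_t)=h_0\,\mathcal{E}(\mu^L)_t$ (valid because $\langle\mu\rangle=\langle\mu^L\rangle$ under $(C)$) reduces the first claim to showing that $N:=V\,\mathcal{E}(\mu^L)$ is a local $(\p,\F_t)$-martingale, and your integration-by-parts cancellation does exactly that, leaving $dN=\mathcal{E}(\mu^L)_t\,(dU_t+V_t\,d\mu^L_t)$ up to the constant $h_0$. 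The transfer step, namely that $\1_{\{\sigma>t\}}N_t/h_t\in\M_{loc}(\q,\G_t)$ whenever $N\in\M_{loc}(\p,\F_t)$, is Lemma \ref{pred}(2) combined with Bayes' formula; your intermediate density should be $\tilde\rho_t$ rather than $\rho_t$, but the computation you actually describe — conditioning $\rho\1_{\{\sigma>t\}}$ on $\F_t$ to produce $h_t$ and cancel the denominator — is the right one. The identification of $\mu^F_{\cdot\wedge\sigma}$ as the $(\q,\G_t)$-compensator, via $U\equiv1$, integration by parts against the continuous finite-variation factor $e^{-\mu^F_{t\wedge\sigma}}$, and uniqueness in the Doob--Meyer decomposition, is also sound: a local-martingale difference suffices to characterize the dual predictable projection of the bounded increasing process $\1_{\{\sigma\leq t\}}$, and continuity of $\mu^F$ under $(C)$ gives predictability. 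The one point that deserves more care than you give it is the localization across filtrations: a reducing sequence for $N$ consists of $(\F_t)$-stopping times $T_n$, and stopping $\1_{\{\sigma>t\}}N_t/h_t$ at $T_n$ yields $\1_{\{\sigma>t\wedge T_n\}}N_{t\wedge T_n}/h_{t\wedge T_n}$, which is not literally of the form $\1_{\{\sigma>t\}}Y_t$; one must split on $\{T_n\leq s\}$ and $\{T_n>s\}$ (or argue as the paper does in Theorem \ref{h0} via the interval $[0,T_0^h)$, using Lemma \ref{pred}(3)). This is a routine technicality that the paper itself glosses over in its proofs of Corollary \ref{cor} and Theorem \ref{h0}, so I do not regard it as a gap.
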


\begin{proof}
The claim is proven in \cite{Mortimer} for $\G_t'=\F_t\vee\sigma\left(\1_{\{\sigma>s\}};s\leq t\right)$ instead of $\G_t$ defined above. However, since $(\G_t')$-martingales remain martingales with respect to the right-continuous augmentation $(\G_t)$ of $(\G_t')$, the claim follows easily.
\end{proof}

As an immediate consequence of the above result we deduce 

\begin{cor}\label{cor1}
Assume that $h$ is strictly positive. If $U\in\mathcal{M}_{loc}(\p,\F_t)$, then 
\[V_{t\wedge\sigma}=U_{t\wedge\sigma}-\langle U,\mu\rangle_{t\wedge\sigma}\in\mathcal{M}_{loc}(\q,\G_t).\]
\end{cor}

\begin{proof}
Taking $U\equiv1$ in Theorem \ref{original} yields that 
\[H_t:=\1_{\{\sigma>t\}}\exp\left(\mu^F_t\right)\in\mathcal{M}_{loc}(\q,\G_t).\]
Since $V$ is continuous, $[H,V]\equiv0$. Hence, the product $HV$ is a local $(\q,\G_t)$-martingale if and only if $V$ is also a local $(\q,\G_t)$-martingale as long as $H_{t-}>0$, i.e.~on the interval $[0,\sigma]$.
\end{proof}

\begin{rem}
If we choose $\rho\equiv1$ in the above corollary, we recover the enlargement formula up to time $\sigma$ (cf.~\cite{DM5}, paragraph XX.76): For any $M\in\mathcal{M}_{loc}(\p,\F_t)$ we have 
\begin{equation}\label{ef}
M_{t\wedge\sigma}-\int_0^{t\wedge\sigma}\frac{d\langle M,Z^\p\rangle_s}{Z^\p_s}=M_{t\wedge\sigma}-\int_0^{t\wedge\sigma}\frac{d\langle M,m^\p\rangle_s}{Z^\p_s}\in\mathcal{M}_{loc}(\p,\G_t).
\end{equation}
\end{rem}

\begin{rem}
In \cite{Mortimer} the authors prove their result without applying any results from the theory of progressive enlargement of filtrations. Of course, Corollary \ref{cor1} can also be proven by applying first Girsanov's theorem and afterwards the enlargement formula under $\q$. For so called honest times this is done in paragraph XX.81 of \cite{DM5}, where a more general version of the above result is proven without relying on the assumptions (AC).
\end{rem}

Next we show that Theorem \ref{original} also holds if $h$ is not necessarily strictly positive.

\begin{thm}\label{h0}
If $U=(U_t)_{t\geq0}$ is a local $(\p,\F_t)$-martingale, then $X_t:=\1_{\{\sigma>t\}}V_t\exp(\mu^F_t)$ and $V_{t\wedge\sigma}$ are local $(\q,\G_t)$-martingales, where $V_{t\wedge\sigma}=U_{t\wedge\sigma}-\langle U,\mu\rangle_{t\wedge\sigma}$.
\end{thm}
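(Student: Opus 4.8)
The plan is to reduce Theorem \ref{h0} to Theorem \ref{original} by an approximation argument, stopping the process $h$ before it hits zero. First I would fix $n\in\N$ and set $S_n:=\inf\{t\geq 0:\ h_t\leq 1/n\}$, which is an $(\F_t)$-stopping time; since $h$ is continuous (by $(C)$) and converges to zero $\p$-a.s., the stopped process $h^{S_n}$ stays strictly positive on the stochastic interval $[0,S_n)$, and more importantly $S_n\uparrow T_0^h$ $\p$-a.s. The key observation is that on $\{\sigma>t\}$ one has $h_{t}>0$ (since $h_t=\E^\p(\rho\1_{\{\sigma>t\}}|\F_t)$ and, on the event $\sigma>t$, the conditional expectation of a nonnegative variable that is a.s.\ positive there cannot vanish — more carefully, $\{h_t=0\}\subseteq\{\E^\p(\rho\1_{\{\sigma>t\}}|\F_t)=0\}\subseteq\{\q(\sigma>t|\F_t)=0\}$ up to a $\q$-null set, hence $\q(\sigma>t,\ h_t=0)=0$), so that $T_0^h\geq\sigma$ $\q$-a.s. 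Consequently $\{\sigma>t\}=\{\sigma>t\}\cap\{t<T_0^h\}$ up to a $\q$-null set, and the indicator $\1_{\{\sigma>t\}}$ is unaffected by replacing $h$ with $h^{S_n}$ for $n$ large on the relevant event.

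Next I would apply Theorem \ref{original} to the strictly positive supermartingale $h^{(n)}:=h^{S_n}\vee(1/n)$ — or, more cleanly, work with an auxiliary random variable $\rho^{(n)}$ chosen so that its associated $h$-process is exactly $h^{(n)}$ and is strictly positive; the corresponding stochastic logarithm $\mu^{(n)}$ has Doob–Meyer decomposition $\mu^{(n)}=\mu^{(n),L}-\mu^{(n),F}$, and on $[0,S_n)$ we have $\mu^{(n)}=\mu$, hence $\mu^{(n),F}=\mu^F$ and $\langle U,\mu^{(n)}\rangle=\langle U,\mu\rangle$ on $[0,S_n)$. Theorem \ref{original} then gives that $\1_{\{\sigma>t\}}V^{(n)}_t\exp(\mu^{(n),F}_t)$ is a local $(\q^{(n)},\G_t)$-martingale, where $V^{(n)}=U-\langle U,\mu^{(n)}\rangle$; since on $\{\sigma>t\}$ we may take $t<S_n$ (letting $n\to\infty$), this quantity agrees with $\1_{\{\sigma>t\}}V_t\exp(\mu^F_t)=X_t$. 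A localization/limiting argument in $n$, together with the fact that a process which is a local martingale on each $[0,S_n)$ with $S_n\uparrow T_0^h\geq\sigma$ is a local martingale on $[0,\sigma]$, then yields that $X$ is a local $(\q,\G_t)$-martingale. The statement that $V$ itself is a local $(\q,\G_t)$-martingale follows exactly as in the proof of Corollary \ref{cor}: $H_t=\1_{\{\sigma>t\}}\exp(\mu^F_t)$ is a $(\q,\G_t)$-martingale, $H$ and the continuous $V$ are orthogonal, so $HV=X$ being a local martingale forces $V$ to be one on $\{H_{-}>0\}=[0,\sigma]$, and $V$ is constant after $\sigma$ by definition.

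The delicate point — and the main obstacle — is the passage to the limit near $T_0^h$, i.e.\ making rigorous that the local-martingale property survives on all of $[0,\sigma]$ when $h$ may actually reach zero. One must be careful that $\exp(\mu^F_t)$ can blow up as $t\uparrow T_0^h$ (indeed $h_t=\mathcal E(\mu)_t\to 0$ forces $\mu^F_t\to+\infty$ if $\mu^L$ stays bounded), so the product $\1_{\{\sigma>t\}}\exp(\mu^F_t)$ is only controlled because the indicator kills the bad event: precisely, $H_t=\1_{\{\sigma>t\}}\exp(\mu^F_t)$ is a bounded-in-expectation object since $\E^\q(H_t)=\q(\sigma>t)\leq 1$, which is what keeps everything finite. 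I would therefore phrase the localization using the $(\G_t)$-stopping times $\sigma\wedge S_n$ and check that $X^{\sigma\wedge S_n}$ is a genuine $(\q,\G_t)$-martingale for each $n$ via Theorem \ref{original}, and then invoke that $\sigma\wedge S_n\uparrow\sigma$ $\q$-a.s.\ to conclude. The rest is routine bookkeeping with Doob–Meyer decompositions and the continuity afforded by assumption $(C)$.
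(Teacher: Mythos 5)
Your proposal follows essentially the same route as the paper's proof: stop at $T^h_{1/n}=S_n$, apply Theorem \ref{original} to the stopped objects, let $n\to\infty$ using that $\sigma$ does not reach $T_0^h$ under $\q$, and then deduce the claim for $V$ exactly as in Corollary \ref{cor}. The only point you elide is the strictness $\q(\sigma<T_0^h)=1$ --- you argue only $\sigma\le T_0^h$ --- which the paper obtains by combining $\E^\q Z^\q_{T_0^{Z^\q}}=0$ with assumption (A) and $\q\ll\p$ to exclude $\sigma=T_0^h$, and which is needed so that $\langle U,\mu\rangle_{t\wedge\sigma}$ and $\mu^F_{t\wedge\sigma}$ are well defined.
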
 

\begin{proof}
First we show that $\q\left(\sigma<T_0^h\right)=1$. For this note that $T_0^h=T_0^\rho\wedge T_0^{Z^\q}$ because $h_t=\rho_tZ_t^\q$. But we have
\[\q\left(T_0^\rho<\infty\right)=\E^\p\left(\rho\1_{\{T_0^\rho<\infty\}}\right)=\E^\p\left(\rho_\infty\1_{\{T_0^\rho<\infty\}}\right)
=\E^\p\left(0\cdot\1_{\{T_0^\rho<\infty\}}\right)=0.\]
Since $\sigma$ avoids stopping times under $\p$ and $\q$ is absolutely continuous to $\p$, $\q(\sigma=T_0^h)=0$ and $\sigma$ is also $\q$-almost surely finite. Hence,
\[\q\left(\sigma\geq T_0^h\right)=\q\left(\sigma>T_0^h\right)=\q\left(\sigma>T_0^{Z^\q}\right)=\E^\q Z^\q_{T_0^{Z^\q}}=0.\]
Especially, this means that $X$ is $\q$-a.s.~well-defined since $\mu$ is well-defined on the interval $[0,T_0^h)$.
Second for every $n\in\N$ we write $U^n_t:=U_{t\wedge T_{1/n}^h},\ t\geq0$.  According to Theorem \ref{original}, the process $X^n_t:=X_{t\wedge T_{1/n}^h}$ is a local $(\q,\G_t)$-martingale for every $n\in\N$. Therefore, $X$ is a local $(\q,\G_t)$-martingale on the interval $\left[0,T_0^h\right)=\bigcup_{n\in\N}\left[0,T_{1/n}^h\right]$
 and since $\left[0,T_0^h\right)\supset [0,\sigma]$ $\q$-almost surely, this implies that 
\[X_t=\1_{\{\sigma>t\}}V_t\exp(\mu^F_t)\in\mathcal{M}_{loc}(\q,\G_t).\]
Finally, $(V_{t\wedge\sigma})$ is a local $(\q,\G_t)$-martingale by the same reasoning as in the proof of Corollary \ref{cor1}.
\end{proof}

\subs{Multiplicative decomposition of the Az\'ema supermartingale}\label{mult}

Before we come to further extensions and applications, we introduce in this subsection the so called It\^o-Watanabe decomposition of the Az\'ema supermartingale, which will be frequently used in the following sections. Since it is less known than the Doob-Meyer decomposition, we briefly recall a continuous version of the result from \cite{ItoWata}, cf.~also \cite{surmartingale}.

\begin{thm}\label{IW}
Let $Z$ be a continuous non-negative supermartingale with Doob-Meyer decomposition $Z=m-A$. Then $Z$ factorises uniquely as $Z=ND$, where $N$ is a continuous non-negative local martingale starting from $N_0=1$ and $D$ is a continuous decreasing process such that both $N$ and $D$ are constant on the set $\{Z=0\}$. Moreover, $N$ and $D$ are given by
\[D_t=Z_0\exp\left(-\int_0^{t\wedge T_0^Z}\frac{dA_s}{Z_s}\right),\quad 
N_t=\mathcal{E}\left(\int_0^{t\wedge T_0^Z}\frac{dm_s}{Z_s}\right).\]
\end{thm}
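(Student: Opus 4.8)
The plan is to establish the It\^o--Watanabe factorisation by explicitly constructing the candidate processes $D$ and $N$, verifying they have the claimed properties, checking that $Z=DN$, and finally showing uniqueness. Throughout I work under the standing assumption that $Z$ is a continuous non-negative supermartingale with Doob--Meyer decomposition $Z=m-A$, $m$ a continuous local martingale and $A$ a continuous increasing process. Because $Z$ may hit zero, all stochastic integrals are stopped at $T_0^Z$; note that on $[T_0^Z,\infty)$ the supermartingale $Z$ stays at $0$ (a non-negative supermartingale absorbed at $0$ once it touches it, by optional stopping), so defining $D$ and $N$ to be constant there is consistent.

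First I would define $D_t=Z_0\exp\bigl(-\int_0^{t\wedge T_0^Z}Z_s^{-1}\,dA_s\bigr)$ and $N_t=\mathcal{E}\bigl(\int_0^{t\wedge T_0^Z}Z_s^{-1}\,dm_s\bigr)$, and check these are well defined: on the stochastic interval $[\![0,T_0^Z[\![$ the process $Z$ is bounded away from $0$ on each compact subinterval (by continuity), so $\int_0^{t}Z_s^{-1}\,dA_s<\infty$ and $\int_0^t Z_s^{-2}\,d\langle m\rangle_s<\infty$ for $t<T_0^Z$; one then argues the integral $\int_0^{T_0^Z}Z_s^{-1}\,dA_s$ is in fact finite a.s.\ (equivalently $D_{T_0^Z}>0$ would fail, so really one shows $D$ extends continuously with $D_{T_0^Z-}$ possibly $0$) --- here the cleaner route is the standard one: let $R_n=\inf\{t:Z_t\le 1/n\}$, work on $[\![0,R_n]\!]$ where everything is a genuine bounded-below semimartingale, and pass to the limit. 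So $D$ is a continuous decreasing process with $D_0=Z_0$, and $N$ is a continuous local martingale with $N_0=1$, both constant after $T_0^Z$ by construction, hence constant on $\{Z=0\}$.

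Next I would verify the identity $Z=DN$. On $[\![0,R_n]\!]$ apply It\^o's formula (integration by parts) to the product $D_tN_t$: since $D$ is of finite variation and $N$ is continuous, $d(DN)=D\,dN+N\,dD=D N Z^{-1}\,dm - DN Z^{-1}\,dA$. If $Z=DN$ on this interval then the right-hand side is $DN Z^{-1}(dm-dA)=DN Z^{-1}\,dZ$, so $Y:=DN$ solves $dY=YZ^{-1}\,dZ$, $Y_0=Z_0$; but $Z$ itself solves the same linear SDE with the same initial condition on $[\![0,R_n]\!]$, and such a linear equation driven by the continuous semimartingale $\int_0^\cdot Z_s^{-1}\,dZ_s$ has a unique solution (it is $Z_0\mathcal E(\int_0^\cdot Z_s^{-1}\,dZ_s)$), whence $DN=Z$ on $[\![0,R_n]\!]$. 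Letting $n\to\infty$ gives $DN=Z$ on $[\![0,T_0^Z[\![$, and on $[T_0^Z,\infty)$ both sides equal $0$ (for $Z$) resp.\ the constant $D_{T_0^Z}N_{T_0^Z}$; continuity of $Z$ and of $DN$ forces $D_{T_0^Z}N_{T_0^Z}=0$, and since $N$ is a strictly positive local martingale up to $T_0^Z$ (it never vanishes, being an exponential) we get $D_{T_0^Z}=0$, so $DN\equiv 0=Z$ there as well. Finally $N$ is non-negative: it is a strictly positive continuous process on $[\![0,T_0^Z[\![$ and constant (equal to $N_{T_0^Z-}\ge 0$) afterwards.

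For uniqueness, suppose $Z=D'N'$ is another such factorisation with $N'$ a positive local martingale, $N'_0=1$, $D'$ decreasing, both constant on $\{Z=0\}$. On $[\![0,T_0^Z[\![$ both $N,N'$ are strictly positive, so $D'/D=N/N'$ is simultaneously a finite-variation process (ratio of the $D$'s) and a local martingale (ratio of the $N$'s --- here I'd note $N/N'$ is a local martingale because, writing $N'=\mathcal E(L')$, one has $d(N/N')=\ldots$, or more simply because the multiplicative decomposition of a continuous positive semimartingale into (local martingale)$\times$(finite variation) is unique, which is itself proved by the same finite-variation-local-martingale argument). A continuous local martingale of finite variation starting at $1$ is constant equal to $1$, so $N=N'$ and $D=D'$ on $[\![0,T_0^Z[\![$; the ``constant on $\{Z=0\}$'' requirement then pins them down on $[T_0^Z,\infty)$ too. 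The main obstacle I anticipate is the careful handling of the zero set of $Z$: making rigorous that the local-martingale integrals and the finite-variation integral converge as $t\uparrow T_0^Z$, that $N$ stays strictly positive up to (but not necessarily including) $T_0^Z$ so that one may divide by it, and that $D_{T_0^Z}=0$ --- all the ``generic'' linear-SDE and integration-by-parts steps are routine once the localisation at the $R_n$ is set up.
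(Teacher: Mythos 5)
The paper itself gives no proof of this theorem --- it is quoted as a known result from It\^o and Watanabe --- so your argument has to stand on its own. Your overall strategy (localise at $R_n=\inf\{t:\,Z_t\le 1/n\}$, identify $DN$ and $Z$ as solutions of the same linear equation $dY=YZ^{-1}\,dZ$, and get uniqueness from the fact that a continuous local martingale of finite variation is constant) is the standard and correct one, but two of your intermediate claims are wrong and need repair. First, ``$N$ never vanishes, hence $D_{T_0^Z}=0$'' is false: the stochastic exponential is strictly positive on $[0,T_0^Z)$, but its left limit at $T_0^Z$ may well be zero. Take $Z$ a Brownian motion started at $1$ and stopped at $0$: then $A\equiv 0$, so $D\equiv 1$ and $N=Z$, and it is $N$, not $D$, that vanishes at $T_0^Z$. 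Fortunately you do not need $D_{T_0^Z}=0$: continuity alone gives $D_{T_0^Z}N_{T_0^Z}=\lim_{t\uparrow T_0^Z}Z_t=0$, and since both factors are constant after $T_0^Z$ this already yields $DN=0=Z$ on $[T_0^Z,\infty)$. (You should also say why $N_{T_0^Z-}$ exists at all --- e.g.\ $N$ is a non-negative supermartingale on each $[0,R_n]$, so Fatou/supermartingale convergence applies --- and why the stopped, extended process is a genuine local martingale on $[0,\infty)$, via a reducing sequence such as $\inf\{t:\,N_t\ge k\}$.)

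Second, in the uniqueness step the assertion that $N/N'$ is a local martingale ``because it is a ratio of the $N$'s'' is false: It\^o's formula produces the finite-variation terms $N\,d\langle N'\rangle/(N')^3-d\langle N,N'\rangle/(N')^2$, which do not vanish in general. The correct (and equally short) argument is the one you only gesture at: set $C:=D'/D=N/N'$, a continuous finite-variation process on $[0,T_0^Z)$; then $N=CN'$ gives $N'\,dC=dN-C\,dN'$, a continuous local martingale of finite variation starting from $0$, hence identically zero; since $N'>0$ there, $C\equiv C_0=1$, so $N=N'$ and $D=D'$ on $[0,T_0^Z)$, and the constancy requirement on $\{Z=0\}=[T_0^Z,\infty)$ extends the identity. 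A last cosmetic point: in the verification of $Z=DN$ the clause ``if $Z=DN$ on this interval then the right-hand side is\dots'' is a non sequitur --- the identity $d(DN)=DNZ^{-1}(dm-dA)$ holds unconditionally from $dN=NZ^{-1}\,dm$ and $dD=-DZ^{-1}\,dA$ --- and should simply be deleted, after which your Dol\'eans-exponential uniqueness argument is clean.
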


\begin{rem}\label{nd}
If $Z=Z^\p$ is the Az\'ema supermartingale of $\sigma$, then
\[Z_t^\p=0\quad\LRA\quad m_t^\p-A^\p_t=\E^\p(A^\p_\infty-A^\p_t|\F_t)=0\quad\LRA\quad A_t^\p=A_s^\p\ \forall\ s\geq t,\]
since $A^\p$ is an increasing process. Therefore, $A^\p$ and $m^\p$ only move on the set $\{Z^\p>0\}$. Hence, in this case the processes
\[D_{t}^\p=\exp\left(-\int_0^{t}\frac{dA^\p_s}{Z^\p_s}\right),\quad N_{t}^\p=\mathcal{E}\left(\int_0^{t}\frac{dm^\p_s}{Z^\p_s}\right)\]
are well-defined and fulfill $supp(dD^\p)\subset\{Z^\p>0\}$ resp.~$supp(d\langle N^\p\rangle)\subset\{Z^\p>0\}$.
Moreover, given the decomposition $Z^\p=N^\p D^\p$ we may replace $Z^\p$ by $N^\p$ in the enlargement formula (\ref{ef}) due to It\^{o}'s product formula: For any $M\in\mathcal{M}_{loc}(\p,\F_t)$, we have
\begin{equation}\label{ef2}
M_{t\wedge\sigma}-\int_0^{t\wedge\sigma}\frac{d\langle M,N^\p\rangle_s}{N^\p_s}\in\mathcal{M}_{loc}(\p,\G_t).
\end{equation}
\end{rem}

Note that if $Z^\p_t>0$ a.s.~for all $t\geq0$ one can write $Z^\p_t=N^\p_t\cdot \exp(-\Lambda^\p_t)$, where $\Lambda^\p_t:=-\ln(D^\p_t)$ is referred to as the intensity process in the credit risk literature. Therefore, the process $D^\p$ is of particular interest in credit risk modeling. However, the intensity as well as the process $D^\p$ depends on the underlying probability measure. Therefore, one may wonder whether there exist changes of probability measure under which the intensity process remains unchanged. The following theorem answers this question.

\begin{thm}\label{D}
Assume that $\rho>0$ ~$\p$-a.s.~and that $\left(\tilde{\rho}_t\right)$ is continuous. If $Z^\p=N^\p D^\p$ and $Z^\q=N^\q D^\q$ denote the It\^o-Watanabe decompositions of the Az\'ema supermartingales of $\sigma$ under $\p$ and $\q$, then $D_t^\p=D_t^\q$ a.s.~for all $t\geq0$.
\end{thm}

\begin{rem}
Intuitively, to affect the intensity of $\sigma$ via a change of probability measure the $(\G_t)$-Radon-Nikodym density $(\tilde{\rho}_t)$ should involve a stochastic integral with respect to the discontinuous martingale $\1_{\{\sigma\leq t\}}-\int_0^{t\wedge\sigma}\frac{dA^\p_s}{Z^\p_s}$. Indeed, the above theorem shows that a change of measure via a continuous $(\G_t)$-martingale will not change the intensity process. See also Theorem 6.3 in \cite{CJN}.
\end{rem}

In order to prove Theorem \ref{D}, we need to compute the process
\[h_t=\E^\p\left(\rho\1_{\{\sigma>t\}}|\F_t\right)=\E^\p\left(\tilde{\rho}_t\1_{\{\sigma>t\}}|\F_t\right).\]
This requires the knowledge about the behaviour of the process $(\tilde{\rho}_t)$ before time $\sigma$. Hence, the following representation result for bounded $(\G_t)$-martingales up to time $\sigma$ is very helpful. It is an immediate consequence of Th\'eor\`eme 5.12 and Lemme 5.15 of \cite{Jeulin}, cf.~also Theorem 3.1 in \cite{JSong}.

\begin{thm}\label{rep1}
For any bounded $\zeta\in\G_\sigma$ there exists a local $(\p,\F_t)$-martingale $M$ and a bounded $(\F_t)$-predictable process $K$ such that 
\[\E^\p(\zeta|\G_t)=M_t-\int_0^t\frac{d\langle M,Z^\p\rangle_s}{Z^\p_s}-\int_0^t\frac{K_s}{Z^\p_s}dA^\p_s\quad\text{on}\ \{\sigma>t\}.\]
Furthermore, if $t\mapsto\E^\p(\zeta|\G_t)$ is continuous almost surely (i.e.~it does not jump at $\sigma$), then $K\equiv0$.
\end{thm}

\begin{proof}
To prove the theorem one can do exactly the same computations as in the proof of Theorem 3.1 in \cite{JSong} without using any martingale representation property. Since we are only interested in the behaviour before time $\sigma$, we do not need the $(\mathcal{H}')$ hypothesis. 
\end{proof}

\begin{rem}
The assumption (AC) is not needed to obtain a characterization of any bounded $(\G_t)$-martingale before time $\sigma$, cf.~\cite{JSong}. The above formulation is however sufficient for our purposes.
\end{rem}

We are now ready to prove Theorem \ref{D}.

\begin{proof} {\it (of Theorem \ref{D}) }
For every $n\in\N$ set $\tau_n:=\inf\{t\geq0:\ \tilde{\rho}_t=n\}$. Then there exists for every $n\in\N$ an $(\F_t)$-stopping time $\nu_n$ such that $\tau_n\wedge\sigma=\nu_n\wedge\sigma$, cf.~Lemma \ref{pred}. Theorem \ref{rep1} applied to $\tilde{\rho}_{\tau_n\wedge \sigma}$ yields the existence of a local $(\p,\F_t)$-martingale $M^n$ such that for all $t\geq0$,
\begin{eqnarray*}
h_{t\wedge\nu_n}&=&
\E^\p\left(\left.\tilde{\rho}_{t\wedge\nu_n\wedge\sigma}\1_{\{\sigma>t\wedge\nu_n\}}\right|\F_{t\wedge\nu_n}\right)
=
\E^\p\left(\left.\tilde{\rho}_{t\wedge\tau_n\wedge\sigma}\1_{\{\sigma>t\wedge\nu_n\}}\right|\F_{t\wedge\nu_n}\right)\\
&=&\E^\p\left(\left.\left(M^n_{t\wedge\nu_n}-\int_0^{t\wedge\nu_n}\frac{d\langle Z^\p,M^n\rangle_s}{Z^\p_s}\right)\1_{\{\sigma>t\wedge\nu_n\}}\right|\F_{t\wedge\nu_n}\right)\\
&=&\E^\p\left(\left.\left(M^n_{t\wedge\nu_n}-\int_0^{t\wedge\nu_n}\frac{d\langle N^\p,M^n\rangle_s}{N^\p_s}\right)\1_{\{\sigma>t\wedge\nu_n\}}\right|\F_{t\wedge\nu_n}\right)\\
&=&\left(M^n_{t\wedge\nu_n}-\int_0^{t\wedge\nu_n}\frac{d\langle N^\p,M^n\rangle_s}{N^\p_s}\right)Z_{t\wedge\nu_n}^\p.
\end{eqnarray*}
Hence for all $t\geq0$,
\[\rho_{t\wedge\nu_n}N_{t\wedge\nu_n}^\q D_{t\wedge\nu_n}^\q=\rho_{t\wedge\nu_n}Z_{t\wedge\nu_n}^\q=h_{t\wedge\nu_n}=\left(M^n_{t\wedge\nu_n}-\int_0^{t\wedge\nu_n}\frac{d\langle N^\p,M^n\rangle_s}{N^\p_s}\right)N_{t\wedge\nu_n}^\p D_{t\wedge\nu_n}^\p.\]
Since $\rho>0$ almost surely, we have $\{Z^\p>0\}=\{h>0\}=\{Z^\q>0\}$. Moreover, the process
\begin{eqnarray*}
\left(M^n_{t\wedge\nu_n}-\int_0^{t\wedge\nu_n}\frac{d\langle N^\p,M^n\rangle_s}{N^\p_s}\right)N_{t\wedge\nu_n}^\p
=\qquad\qquad\qquad\qquad\qquad\qquad\qquad\qquad\qquad\qquad\\
\qquad\qquad M^n_0N^\p_0+\int_0^{t\wedge\nu_n}N_s^\p dM^n_s+\int_0^{t\wedge\nu_n}\left(M^n_s-\int_0^s\frac{d\langle N^\p,M^n\rangle_u}{N^\p_u}\right)dN^\p_s
\end{eqnarray*}
is a non-negative local $(\p,\F_t)$-martingale. Since $\left(\rho_{t\wedge\nu_n} N^\q_{t\wedge\nu_n}\right)$ is also a non-negative local $(\p,\F_t)$-martingale, the uniqueness of the It\^o-Watanabe decomposition yields that
\[\left(M^n_{t\wedge\nu_n}-\int_0^{t\wedge\nu_n}\frac{d\langle N^\p,M^n\rangle_s}{N^\p_s}\right)N_{t\wedge\nu_n}^\p=\rho_{t\wedge\nu_n}N_{t\wedge\nu_n}^\q\quad\text{and}\quad D_{t\wedge\nu_n}^\p=D_{t\wedge\nu_n}^\q\]
almost surely on $\{Z^\p>0\}=\{Z^\q>0\}$. Because $\tau_n\ra\infty$, we have
\[\sup_n(\nu_n\wedge\sigma)=\sup_n(\tau_n\wedge\sigma)=\sigma,\]
which implies that $\nu:=\sup_n\nu_n\geq\sigma$ almost surely. But then $Z^\p_\nu=0=Z^\q_\nu$ a.s.~and therefore $\nu\geq T_0^{Z^\p}= T_0^{Z^\q}$ $\p$-a.s. Because $D^\q$ and $D^\p$ are monotone increasing, the claim follows by sending $n\ra\infty$, noting that $D^\p$ and $D^\q$ are constant after time $T_0^{Z^\p}= T_0^{Z^\q}$.
\end{proof}

The following counterexample shows that the assumption that $(\tilde{\rho}_t)$ is continuous cannot be dropped in Theorem \ref{D}. 

\begin{ex}\label{nnn}
Let $N$ be a non-negative local martingale starting from $N_0=1$ and converging to zero almost surely and set $\sigma=\sup\{t>0:\ N_t=\ol{N}_t\}$. By Doob's maximal equality, cf.~Lemma 2.1 in \cite{DoobMax},
\[\p\left(\left.\sup_{s>t}N_s>a\right|\F_t\right)=\frac{N_t}{a}\quad\forall\ a>\ol{N}_t,\]
which implies that
\[Z_t^\p=\p\left(\left.\sup_{s>t}N_s>\ol{N}_t\right|\F_t\right)=\frac{N_t}{\ol{N}_t}=1+\int_0^t\frac{dN_s}{\ol{N}_s}-\log\left(\ol{N}_t\right).\]
Therefore, $supp(d\langle N\rangle)\subset\{Z^\p>0\}$ and $supp(d\ol{N})\subset\{Z^\p>0\}$. Hence, the uniqueness of the multiplicative decomposition defined in Theorem \ref{IW} implies that 
\[N^\p=N\quad\text{and}\quad D^\p=\frac{1}{\ol{N}}.\]
Now we may take $\rho=\log\left(\ol{N}_\infty\right)$ because $\E^\p\log\left(\ol{N}_\infty\right)=N_0\int_1^\infty\frac{da}{a}=1$. Then
\begin{eqnarray*}
h_t&=&\E^\p\left(\left.\log\left(\ol{N}_\infty\right)\1_{\left\{\sup_{s>t}N_s>\ol{N}_t\right\}}\right|\F_t\right)=
\E^\p\left(\left.\log\left(\sup_{s>t}N_s\right)\1_{\left\{\sup_{s>t}N_s>\ol{N}_t\right\}}\right|\F_t\right)\\
&=&N_t\int_{\ol{N}_t}^\infty\frac{\log(x)}{x^2}dx=\frac{N_t}{\ol{N}_t}(1+\log\left(\ol{N}_t)\right)
\end{eqnarray*}
and applying Lemma \ref{pred},
\begin{eqnarray*}
\tilde{\rho}_t=\E^\p(\rho|\G_t)&=&
\1_{\{\sigma\leq t\}}\log\left(\ol{N}_t\right)+\1_{\{\sigma>t\}}\frac{\E^\p\left(\left.\log\left(\ol{N}_\infty\right)\1_{\{\sigma>t\}}\right|\F_t\right)}{Z^\p_t}\\
&=&\1_{\{\sigma\leq t\}}\log\left(\ol{N}_t\right)+\1_{\{\sigma>t\}}\frac{\ol{N}_t}{N_t}\cdot h_t\\
&=&\1_{\{\sigma\leq t\}}\log\left(\ol{N}_t\right)+\1_{\{\sigma>t\}}\left(1+\log\left(\ol{N}_t\right)\right)= \log\left(\ol{N}_t\right)+\1_{\{\sigma>t\}}.
\end{eqnarray*}
Hence, $\tilde{\rho}$ is a purely discontinuous $(\p,\G_t)$-martingale and
\[\rho_t=\log\left(\ol{N}_t\right)+\frac{N_t}{\ol{N}_t}.\]
Therefore, 
\[Z_t^\q=\frac{h_t}{\rho_t}=\frac{N_t}{\ol{N}_t}\left(1+\log\left(\ol{N}_t\right)\right)\frac{1}{\rho_t}=\frac{N_t}{\ol{N}_t}\frac{\left(1+\log\left(\ol{N}_t\right)\right)\ol{N}_t}{N_t+\ol{N}_t\log\left(\ol{N}_t\right)}=\frac{N_t+N_t\log\left(\ol{N}_t\right)}{N_t+\ol{N}_t\log(\ol{N}_t)}.\]
And since $(N_t/\rho_t)\in\mathcal{M}_{loc}(\q,\F_t)$, the It\^o-Watanabe decomposition of $Z^\q$ takes the form
\[Z^\q_t=\frac{N_t}{\rho_t}\cdot\frac{1+\log(\ol{N}_t)}{\ol{N}_t}=N_t^\q D_t^\q\]
with
\[D_t^\q=\frac{1+\log(\ol{N}_t)}{\ol{N}_t}\neq\frac{1}{\ol{N}_t}=D_t^\p\quad\text{for $t$ large enough}.\]
\end{ex}

\s{Locally absolutely continuous change of measure}\label{abs}

In this section we slightly change the general setup introduced in section \ref{notations}. We will no longer rely on the existence of a random variable $\rho\geq0$ to define $\q$, but instead we will only assume the existence of some non-negative $(\p,\G_t)$-martingale $(\tilde{\rho}_t)$ with expectation one. As before $(\rho_t)$ is the $(\F_t)$-optional projection of $(\tilde{\rho}_t)$. Moreover, we will assume that $\F=\F_\infty:=\bigvee_{t\geq0}\F_t$ and that $(\Omega,\F,(\G_t)_{t\geq0},\p)$ is the natural augmentation of a probability space satisfying the Parthasarathy condition $(P)$, which can be found in the appendix.

For every $t\geq0$ we now define a probability measure $\q_t$ on $\G_t$ via $\q_t=\tilde{\rho}_t.\p|_{\G_t}$. This family of probability measures is consistent and since we assume our probability space to satisfy condition $(P)$ as well as the natural (but not the usual!) assumptions, Corollary 4.9 of \cite{newkind} yields the existence of a measure $\q$ on $\F=\G_\infty$ such that $\q|_{\G_t}=\q_t$ for all $t\geq0$. Note that $\q$ is only locally absolutely continuous with respect to $\p$, which we denote by $\q\triangleleft\p$. We define the process $h$ in this case by $h_t:=\E^\p(\tilde{\rho}_t\1_{\{\sigma>t\}}|\F_t)$. If $\q\ll\p$, this definition coincides with the one in section \ref{notations}. $\mu$ can now be defined as before.\vskip6pt

In this setting the following extended version of Theorem \ref{original} holds. 

\begin{thm}\label{lac}
Assume that $\q\triangleleft\p$. If $U=(U_t)_{t\geq0}$ is a local $(\p,\F_t)$-martingale, then the processes $X_t:=\1_{\{\sigma>t\}}V_t\exp(\mu^F_t)$ and $V_{t\wedge\sigma}$ are both local $(\q,\G_t)$-martingales, where $V_{t\wedge\sigma}:=U_{t\wedge\sigma}-\langle U,\mu\rangle_{t\wedge\sigma}$.
\end{thm}

\begin{proof}
Since $\q|_{\G_n}\ll\p|_{\G_n}$ the claim holds for every $U^n_t:=U_{t\wedge n}$ according to Theorem \ref{h0}. Especially, all processes are well-defined on $\bigcup_{n\in\N}[0,n]=\R_+$. But every process which is locally in $\mathcal{M}_{loc}(\q,\G_t)$ is actually a local martingale on the whole time interval. 
\end{proof}

The motivation to study locally absolutely continuous changes of measures comes from the fact that it may allow us to get rid off the random time $\sigma$ by pushing it to infinity as the following example demonstrates. 

\begin{ex}\label{push}
Consider
\[\tilde{\rho}_t=\frac{\1_{\{\sigma>t\}}}{Z_t^\p}.\]
This does indeed define a $(\G_t)$-martingale: for $s\leq t$,
\[\E^\p\left(\left.\frac{\1_{\{\sigma>t\}}}{Z_t^\p}\right|\G_s\right)=\frac{\1_{\{\sigma>s\}}}{Z_s^\p}\cdot\E^\p\left(\left.\frac{\1_{\{\sigma>t\}}}{Z_t^\p}\right|\F_s\right)=\frac{\1_{\{\sigma>s\}}}{Z_s^\p},\]
where we used Lemma \ref{pred} to compute the conditional expectation. Under the measure $\q$ defined as above $\sigma$ is pushed to infinity since 
\[\q(\sigma\leq t)=\E^\p\left(\tilde{\rho}_t\1_{\{\sigma\leq t\}}\right)=\E^\p\left(\frac{\1_{\{\sigma>t\}}}{Z_t^\p}\1_{\{\sigma\leq t\}}\right)=0\quad \forall\ t\geq0.\]
This is possible because $\tilde{\rho}_t\ra0$ $\p$-a.s.~and therefore $\q$ is not absolutely continuous to $\p$ on $\F=\G_\infty$. Thus, $\q$ puts only positive weight on those events taking place before $\sigma$. Moreover, if $Z$ is a bounded random variable which is $\F_t$-measurable for some $t\geq0$, then
\[\q(Z\leq x)=\p(Z\leq x)\quad\forall\ x\in\R,\] 
because $\rho_t\equiv 1$ for all $t\geq0$. Therefore,  $\F_t$-events do not "feel" the change of measure. Especially, 
any $(\p,\F_t)$-martingale is also a $(\q,\F_t)$-martingale and by Theorem \ref{lac} also a $(\q,\G_t)$-martingale since $h_t=\rho_t=1$ for all $t\geq0$ and $\sigma=\infty\ \q$-a.s.\\
Note that in the computation of pre-$\sigma$-events this measure change has the same impact as simply projecting down on $(\F_t)$. Indeed, every $\G_t$-measurable random variable is equal to an $\F_t$-measurable random variable before time $\sigma$, and for every $F_t\in\F_t$ one has
\[\E^\p\left(F_t\1_{\{\sigma>t\}}\right)=\E^\p\left(\frac{\1_{\{\sigma>t\}}}{Z_t^\p}\cdot F_tZ_t^\p\right)=\E^\q\left(F_tZ_t^\p\right)=\E^\p\left(F_tZ_t^\p\right).\]
\end{ex}

\subs{A change of measure which is equivalent to the enlargement formula}

As before we denote by $Z^\p=N^\p D^\p$ the It\^o-Watanabe decomposition of the Az\'ema supermartingale of $\sigma$. Under the assumption that $N^\p$ is a true martingale, we may set
\[\left.\frac{d\q}{d\p}\right|_{\G_t}=\tilde{\rho}_t=\frac{\1_{\{\sigma>t\}}}{D^\p_t}.\]
One easily checks that this defines a $(\G_t)$-martingale: for $s\leq t$,
\[\E^\p\left(\left.\frac{\1_{\{\sigma>t\}}}{D^\p_t}\right|\G_s\right)=
\frac{\1_{\{\sigma>s\}}}{Z_s^\p}\cdot\E^\p\left(\left.\frac{\1_{\{\sigma>t\}}}{D^\p_t}\right|\F_s\right)=
\frac{\1_{\{\sigma>s\}}}{Z_s^\p}\cdot\E^\p\left(\left.N^\p_t\right|\F_s\right)=\frac{\1_{\{\sigma>s\}}}{D^\p_s}.\]
As in Example \ref{push} we have $\q(\sigma<\infty)=0$ and hence any local $(\q,\F_t)$-martingale is also a local $(\q,\G_t)$-martingale. However, now
\[h_t=\rho_t=N^\p_t\]
is non-trivial and therefore the measure change will affect $(\p,\F_t)$-martingales according to the usual Girsanov theorem: given a local $(\p,\F_t)$-martingale $U$, the process
\[V_t:=U_t-\int_0^t\frac{d\langle N^\p,U\rangle_s}{N^\p_s}=U_t-\int_0^t\frac{d\langle Z^\p,U\rangle_s}{Z^\p_s},\quad t\geq0,\]
is a local $(\q,\G_t)$-martingale by Theorem \ref{lac}, since $V_t=V_{\sigma\wedge t}$ $\q$-almost surely for all $t\geq0$. Therefore, changing the measure in this way has the same effect as an application of the enlargement formula under $\p$ before time $\sigma$. This can be compared to \cite{Yoeurp}, where the enlargement formula was derived by passing to the so called F\"ollmer measure associated with $Z^\p$, and to the local solution method for enlargements of filtrations developed in \cite{Song}.\vskip2pt

Also note that in this setup we have for any $\F_t$-measurable random variable $F_t$,
\[\E^\p\left(F_t\1_{\{\sigma>t\}}\right)=\E^\q\left(F_tD^\p_t\right).\]
Since $D^\p$ is decreasing, one can interpret $D^\p_t$ as a discount factor in the above formula.

\begin{rem}
In \cite{Collin} the above measure change is applied to the valuation of defaultable securities via the reduced-form approach. However, in that paper the default time is directly modeled as a totally inaccessible stopping time without performing a progressive enlargement of filtration.
\end{rem}

The following example provides some intuition how the above measure change pushes $\sigma$ to infinity.

\begin{ex}
Consider the random time
\[\sigma=\sup\left\{t\geq0:\ N_t=\sup_{s\leq t}N_s\right\}=\sup\left\{t\geq0:\ \frac{1}{N_t}=\inf_{s\leq t}\frac{1}{N_s}\right\},\] 
where $N$ is supposed to be a non-negative $(\p,\F_t)$-martingale with $N_0=1$, converging towards zero almost surely. 
In this case $N^\p=N$, cf.~Example \ref{nnn}. If we take $(\tilde{\rho}_t)$ as above, the reciprocal of $N$ becomes a $\q$-martingale: for $s\leq t$,
\[\E^\q\left(\left.\frac{1}{N_t}\right|\F_s\right)=\frac{1}{\rho_s}\E^\p\left(\left.\frac{\rho_t}{N_t}\right|\F_s\right)=\frac{1}{N_s}.\]
However, $1/N$ does not converge to infinity but to zero under $\q$ because $\q$ is singular to $\p$ on $\F_\infty$. For all $\eps>0$ we have by dominated convergence as $t\ra\infty$,
\[\q\left(\frac{1}{N_t}>\eps\right)=\E^\p\left(N_t\1_{\{1/\eps>N_t\}}\right)\ra 0.\]
Therefore, $\sigma=\infty$ $\q$-a.s. 
\end{ex}

\begin{rem}
In the above computations we have assumed that $N^\p$ is a true martingale. If $N^\p$ is only a local $(\p,\F_t)$-martingale, analogous computations can be done if one defines $\q$ as the F\"ollmer measure associated with $(\tilde{\rho}_t)$. In this case the random time $\sigma$ is "replaced" under $\q$ by the explosion time of $(\tilde{\rho}_t)$, which is equal to the $(\F_t)$-stopping time $T_0^{D^\p}$ $\q$-almost surely.
\end{rem}

\s{Changes of measure for honest times}\label{hon}

In this section we focus on a special class of random times called honest times. The setup will be the same as described at the beginning of Section \ref{general}.

\begin{df}\label{defhon}
A random time $\sigma$ on $(\Omega,\F,(\F_t),\p)$ is called honest if for any $t>0$, $\sigma$ is equal to an $\F_t$-measurable random variable on $\{\sigma<t\}$.
\end{df}

\begin{rem}
Note that the definition of an honest time does not depend on the probability measure. It is shown in Proposition (5,1) of \cite{Jeulin} that if $\sigma$ is honest, then there exists an optional set $\Lambda$ such that $\sigma(\omega)=\sup\{t:\ (t,\omega)\in\Lambda\}$ on $\{\sigma<\infty\}$. Since under assumption (C) the optional and predictable $\sigma$-field are equal, we may assume w.l.o.g.~that the set $\Lambda$ is predictable. Moreover, $\p(\sigma=\infty)=0$ due to (A) and therefore $\sigma$ is the end of a predictable set in our setup.
\end{rem}

\subs{Change of measure after an honest time}

So far we were only concerned with changes of measure {\it up to} an arbitrary random time $\sigma$. Of course, we cannot expect an analogue of Corollary \ref{cor1} to hold {\it after} an arbitrary random time $\sigma$, because in general $(\F_t)$-semimartingales are not necessarily $(\G_t)$-semimartingales after time $\sigma$. However, if $\sigma$ is an honest time, then it is well-known that the semimartingale property is preserved when passing from $(\F_t)$ to $(\G_t)$, cf.~e.g.~Th\'eor\`eme (5,10) in \cite{Jeulin}. Hence, in this case one can expect to have an extension of Corollary \ref{cor1} to the whole time horizon. Our goal in this subsection is to prove this result by similar means as in \cite{Mortimer}, i.e.~{\it without} relying on any results from the theory of enlargements of filtrations.\\

{\bf For the rest of this section we suppose that $\sigma$ is an honest time.} As before we assume that there exists a non-negative random variable $\rho$ with expectation one and we set $\q=\rho.\p$.
We define the $(\p,\F_t)$-submartingale $k$ via 
\[k_t=\E^\p(\rho|\F_t)-h_t=\E^\p\left(\rho\1_{\{\sigma\leq t\}}|\F_t\right).\]
In the following we will use for fixed $u\geq0$ the notation 
\[\mathcal{M}^u(\p,\F_t)\]
to denote the class of processes which are $(\p,\F_t)$-martingales on the interval $[u,\infty)$. Moreover, for each $t\geq0$ we choose an $\F_t$-measurable random variable $\sigma_t$ which satisfies the requirement of Definition \ref{defhon}, 
i.e.~$\1_{\{\sigma<t\}}\sigma=\1_{\{\sigma<t\}}\sigma_t$.

\begin{lem}\label{l1}
Fix $u\geq0$ and let $Y$ be an $(\F_t)$-adapted process such that $(\1_{\{\sigma_t\leq u\}}k_tY_t)_{t\geq u}\in\mathcal{M}_{loc}^u(\p,\F_t)$. Then $Y_t\1_{\{\sigma\leq u\}}\in\mathcal{M}_{loc}^u(\q,\G_t)$. 
\end{lem}

\begin{proof} Because any $(\F_t)$-localizing sequence will also serve as a $(\G_t)$-localizing sequence, we only need to prove the martingale case. Recalling that $\sigma$ is an honest time which avoids stopping times, we have for any bounded test function $F_s\in\F_s,\ s\leq t,$ and $u\leq s\leq t$,
\begin{eqnarray*}
\E^\q(Y_t\1_{\{\sigma\leq u\}}F_s)&=&\E^\q(Y_t\1_{\{\sigma\leq t\}}\1_{\{\sigma_t\leq u\}}F_s)=
\E^\p(Y_t\rho\1_{\{\sigma\leq t\}}\1_{\{\sigma_t\leq u\}}F_s)\\&=&
\E^\p(Y_tk_t\1_{\{\sigma_t\leq u\}}F_s)=\E^\p(Y_sk_s\1_{\{\sigma_s\leq u\}}F_s)\\
&=&\E^\p(Y_s\rho\1_{\{\sigma\leq s\}}\1_{\{\sigma_s\leq u\}}F_s)=\E^\q(Y_s\1_{\{\sigma\leq u\}}F_s).
\end{eqnarray*}
Furthermore, if in addition $r\leq s$, then one gets
\begin{eqnarray*}
\E^\q(Y_t\1_{\{\sigma\leq u\}}\1_{\{\sigma\leq r\}}F_s)&=&\E^\q(Y_t\1_{\{\sigma\leq u\}}\1_{\{\sigma\leq s\}}\1_{\{\sigma_s\leq r\}}F_s)=\E^\q(Y_t\1_{\{\sigma\leq u\}}\1_{\{\sigma_s\leq r\}}F_s)\\
&=&\E^\q(Y_s\1_{\{\sigma\leq u\}}\1_{\{\sigma_s\leq r\}}F_s)=
\E^\q(Y_s\1_{\{\sigma\leq u\}}\1_{\{\sigma\leq s\}}\1_{\{\sigma_s\leq r\}}F_s)\\
&=&\E^\q(Y_s\1_{\{\sigma\leq u\}}\1_{\{\sigma\leq r\}}F_s).
\end{eqnarray*}
The monotone class theorem allows us to conclude that $Y_t\1_{\{\sigma\leq u\}}$ is a $\q$-martingale with respect to $\left(\F_t\vee\sigma(\1_{\{\sigma\leq r\}};r\leq t)\right)_{t\geq u}$. Because martingales with respect to some filtration remain martingales with respect to its right-continuous augmentation, we thus conclude that $Y_t\1_{\{\sigma\leq u\}}\in\mathcal{M}^u(\q,\G_t)$.
\end{proof}

\begin{rem}\label{rl1}
Note that if $(Y_t)_{t\geq u}$ is a martingale with respect to $\q$ and $(\G_t)_{t\geq u}$ on the set $\{\sigma\leq u\}$, then it is also a martingale on any $\G_u$-measurable subset of $\{\sigma\leq u\}$. Thus, for example 
\[\left(Y_t\1_{\{u_i\leq\sigma<u_j\}}\right)_{t\geq u}\in\mathcal{M}^u(\G_t,\q)\]
for every $0\leq u_i<u_j\leq u$.
\end{rem} 

\begin{lem}\label{l1b}
Let $(Y_t)$ be a real-valued continuous $(\G_t)$-adapted process such that the process $(\1_{\{\sigma\leq u\}}(Y_{t\vee u}-Y_u))_{t\geq 0}\in\mathcal{M}_{loc}(\q,\G_t)$ for all $u>0$. Then $\left(Y_{t\vee\sigma}-Y_\sigma\right)_{t\geq0}\in\mathcal{M}_{loc}(\q,\G_t)$. 
\end{lem}

\begin{proof}
Let us first assume that $Y$ is bounded. Then by Remark \ref{rl1} for all $u>v\geq0$,
\begin{eqnarray*}
\left(\1_{\{v\leq\sigma<u\}}(Y_{t\vee u}-Y_u)\right)_{t\geq 0}\in\mathcal{M}(\q,\G_t).
\end{eqnarray*}
We approximate $\sigma$ with the decreasing sequence of $(\G_t)$-stopping times 
\[s_n:=\sum_{k=1}^{n2^n}\frac{k}{2^n}\1_{\{(k-1)2^{-n}\leq\sigma<k2^{-n}\}}+\infty\1_{\{\sigma\geq n\}},\]
taking only finitely many values. Then for $s\leq t$ and $G_s\in\G_s$, because $Y$ is assumed to be bounded and continuous,
\begin{eqnarray*}
\E^\q\left((Y_{t\vee\sigma}-Y_\sigma)\1_{G_s}\right)&=&
\lim_{n\ra\infty}\E^\q\left((Y_{t\vee s_n}-Y_{s_n})\1_{G_s}\right)\\
&=&\lim_{n\ra\infty}\E^\q\left(\sum_{k=1}^{n2^n}\1_{\{s_n=k2^{-n}\}}(Y_{t\vee(k2^{-n})}-Y_{k2^{-n}})\1_{G_s}\right)\\
&=&\lim_{n\ra\infty}\sum_{k=1}^{n2^n}\E^\q\left(\underbrace{\1_{\{(k-1)2^{-n}\leq\sigma< k2^{-n}\}}(Y_{t\vee(k2^{-n})}-Y_{k2^{-n}})}_{\in\mathcal{M}(\q,\G_t)}\1_{G_s}\right)\\
&=&\lim_{n\ra\infty}\sum_{k=1}^{n2^n}\E^\q\left(\1_{\{(k-1)2^{-n}\leq\sigma<k2^{-n}\}}(Y_{s\vee(k2^{-n})}-Y_{k2^{-n}})\1_{G_s}\right)\\
&=&\lim_{n\ra\infty}\E^\q\left(\sum_{k=1}^{n2^n}\1_{\{s_n=k2^{-n}\}}(Y_{s\vee(k2^{-n})}-Y_{k2^{-n}})\1_{G_s}\right)\\
&=&\lim_{n\ra\infty}\E^\q\left((Y_{s\vee s_n}-Y_{s_n})\1_{G_s}\right)=\E^\q\left((Y_{s\vee\sigma}-Y_\sigma)\1_{G_s}\right),
\end{eqnarray*}
which proves that $Y_{t\vee\sigma}-Y_\sigma\in\mathcal{M}(\q,\G_t)$. Now the general case follows by localizing $Y$.
\end{proof}

\begin{thm}\label{exhon}
Let $\sigma$ be an honest time and suppose that $(U_t)_{t\geq0}$ is local $(\p,\F_t)$-martingale. Then the process 
\[V_t:=U_t-\int_0^{\sigma\wedge t}\frac{d\langle U,h\rangle_s}{h_s}-\int_{\sigma}^{\sigma\vee t}\frac{d\langle U,k\rangle_s}{k_s}\] 
is a local $(\q,\G_t)$-martingale.
\end{thm}

\begin{proof}
From Theorem \ref{h0} we already know that $(V_{t\wedge\sigma})_{t\geq0}$ is a local $(\q,\G_t)$-martingale. Therefore, it remains to show that $V_{t\vee\sigma}-V_{\sigma}\in\mathcal{M}_{loc}(\q,\G_t)$. According to Lemma \ref{l1b} this holds if for all $u>0$,
\[\1_{\{\sigma\leq u\}}(V_{t\vee u}-V_u)=\1_{\{\sigma\leq u\}}V_{t}^u\in\mathcal{M}_{loc}(\q,\G_t)\qquad\LRA\qquad \1_{\{\sigma\leq u\}}V_{t}^u\in\mathcal{M}^u_{loc}(\q,\G_t),\]
where we have defined for each $s\in\R_+$ the $(\F_t)$-adapted process
\[V_t^s:=U_{t\vee s}-U_s-\int_s^{t\vee s}\frac{d\langle k,U\rangle_u}{k_u}.\]
Therefore, an application of Lemma \ref{l1} will yield the result, if we can show that for all $u\geq0$, 
\[(\1_{\{\sigma_t\leq u\}}k_tV_t^u)_{t\geq u}\in\mathcal{M}^u_{loc}(\p,\F_t).\] 
First note that for all $t\geq u$, 
\[m_t^u:=\1_{\{\sigma_t\leq u\}}k_t=\E^\p(\rho\1_{\{\sigma\leq t,\sigma_t\leq u\}}|\F_t)=\E^\p(\rho\1_{\{\sigma\leq u\wedge t\}}|\F_t)=\E^\p(\rho\1_{\{\sigma\leq u\}}|\F_t)\]
and hence for every fixed $u>0$, $m^u\in\mathcal{M}^u(\p,\F_t)$. We apply integration by parts for $t\geq u$ to get
\begin{eqnarray*}
d(\1_{\{\sigma_t\leq u\}}k_tV_t^u)&=&d(m^u_tV_t^u)=V_t^udm^u_t+m^u_tdV_t^u+d\langle m^u,V^u\rangle_t\\
&=&V_t^udm^u_t+\1_{\{\sigma_t\leq u\}}\left[k_t\left(dU_t-\frac{d\langle k,U\rangle_t}{k_t}\right)+d\langle k,U\rangle_t\right]\\
&=&V_t^udm^u_t+m^u_tdU_t,
\end{eqnarray*}
which is an element of $\mathcal{M}^u_{loc}(\p,\F_t)$ for every $u>0$ as required. 
\end{proof}

\begin{rem}
In fact a more general version of Theorem \ref{exhon} is known to hold even without assuming (AC). This can be proven by applying first Girsanov's theorem and second the enlargement formula for honest times as it is done in paragraph 81 in \cite{DM5}. Note however, that our proof does \textit{not} make use of the enlargement formula. It only uses Definition \ref{defhon} of an honest time. Therefore as a byproduct by setting $\rho\equiv1$ we do actually recover the enlargement formula after $\sigma$ for honest times.
\end{rem}

\subs{Relative martingales}

Given an honest time $\sigma$ the process $k$ introduced in the previous subsection is actually a so called ``relative martingale''. Relative martingales were introduced in \cite{relmart} and are defined as follows.

\begin{df}
Let $\sigma$ be an honest time and $(Y_t)$ an $(\F_t)$-adapted right-continuous process such that $Y_\infty:=\lim_{t\ra\infty}Y_t$ exists $\p$-almost surely and in $L^1(\p)$. Then $(Y_t)$ is called a relative martingale associated with $\sigma$, if $Y_t=\E^\p(Y_\infty\1_{\{\sigma\leq t\}}|\F_t)$ for all $t\geq0$.
\end{df}

Hence, for an honest time $\sigma$ the process $k_t=\E^\p(\rho\1_{\{\sigma\leq t\}}|\F_t)$ is a relative martingale with final value $k_\infty=\E^\p(\rho|\F_\infty)$. Therefore, the class of relative martingales associated with $\sigma$ will provide us with nice non-trivial examples to illustrate Theorem \ref{exhon}. The following result from \cite{relmart} is very helpful in finding relative martingales. 

\begin{lem}\label{sigmad}
Let $(Y_t)$ be a continuous non-negative submartingale of class $(D)$ with Doob-Meyer decomposition $Y=M+F$, where $M\in\mathcal{M}_{loc}(\p,\F_t)$ and $F$ is an increasing $(\F_t)$-adapted process. Assume that $M_0=F_0=0$, $\p(Y_\infty=0)=0$ and that the measure $(dF_t)$ is carried by the set $\{t:\ Y_t=0\}$. Then $(Y_t)$ is a relative martingale associated with $\sigma=\sup\{t\geq0:\ Y_t=0\}$.
\end{lem}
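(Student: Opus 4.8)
The plan is to verify the defining identity $Y_t = \E^\p(Y_\infty \1_{\{\sigma \le t\}} \mid \F_t)$ for $\sigma = \sup\{t \ge 0 : Y_t = 0\}$, using the structure of the Doob--Meyer decomposition and the hypothesis that $dF$ is carried by the zero set of $Y$. First I would record two preliminary observations. Since $\p(Y_\infty = 0) = 0$ and $(Y_t)$ is continuous, the random set $\{t : Y_t = 0\}$ is $\p$-a.s.\ bounded, so $\sigma < \infty$ a.s.; moreover, because $dF$ is carried by $\{Y = 0\}$ and $Y_\infty \ne 0$, the process $F$ is constant after $\sigma$, i.e.\ $F_t = F_{t \wedge \sigma} = F_\sigma$ for $t \ge \sigma$. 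Also, on $\{\sigma > t\}$ there is a later zero of $Y$, and $F$ still increases beyond $t$ on that event; conversely, since $F_0 = 0$ and $F$ increases only on $\{Y = 0\}$, one checks that $\{\sigma \le t\}$ and $\{F_t = F_\infty\}$ coincide up to a null set — this is the bridge between the random time and the increasing process.

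The key computation is then the following. Fix $t \ge 0$. Since $Y = M + F$ with $M$ a (local, but by class $(D)$ actually uniformly integrable after the usual localization argument) martingale, and since $Y_\infty = M_\infty + F_\infty$, I would write
\[
\E^\p(Y_\infty \1_{\{\sigma \le t\}} \mid \F_t) = \E^\p\big((M_\infty + F_\infty)\1_{\{\sigma \le t\}} \mid \F_t\big).
\]
On $\{\sigma \le t\}$ we have $F_\infty = F_\sigma = F_t$, so the $F_\infty$-term contributes $F_t \1_{\{\sigma \le t\}}$; and on $\{\sigma > t\}$ we have $Y_t > 0$ while $F$ is carried by $\{Y=0\}$, forcing (by the description above) $\dots$ — more precisely, I would instead compute $\E^\p(Y_\infty \mid \F_t)$ and $\E^\p(Y_\infty \1_{\{\sigma > t\}} \mid \F_t)$ separately: the first equals $\E^\p(M_\infty \mid \F_t) + \E^\p(F_\infty \mid \F_t) = M_t + \E^\p(F_\infty \mid \F_t)$, and subtracting should leave exactly $Y_t = M_t + F_t$ once one shows $\E^\p\big((M_\infty + F_\infty)\1_{\{\sigma>t\}} \mid \F_t\big) = \E^\p(F_\infty - F_t \mid \F_t)$. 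This last identity is the crux: it says that conditionally on $\F_t$, the part of $Y_\infty$ living on $\{\sigma > t\}$ is accounted for entirely by the future increments of $F$. To see it, approximate $\1_{\{\sigma > t\}}$ by $\1_{\{Y \text{ hits } 0 \text{ in } (t,\infty)\}}$ and use that on $\{\sigma > t\}$ the process $Y$ returns to $0$, at which instant $M$ and $F$ satisfy $M = -F$; integrating the balance equation $0 = M_u + F_u - (M_t + F_t) \cdot(\dots)$ along the zero set and taking conditional expectations turns the martingale part into the predictable part. Alternatively, and more cleanly, I would invoke the Azéma-supermartingale viewpoint: $Z^\p_t = \p(\sigma > t \mid \F_t)$ has dual predictable projection $A^\p$, and for a relative martingale one expects $Y_t = \E^\p(Y_\infty \mid \F_t) - \E^\p(Y_\infty \1_{\{\sigma>t\}}\mid\F_t)$ where the subtracted term is the optional projection of $Y_\infty \1_{\{\sigma>t\}}$; one then identifies this with $M_t - Y_t + \E^\p(F_\infty\mid\F_t)$ using $F_{t} = F_{t\wedge\sigma}$.

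I expect the main obstacle to be precisely the identification $\E^\p\big(Y_\infty \1_{\{\sigma > t\}} \mid \F_t\big) = \E^\p(F_\infty - F_t \mid \F_t)$, i.e.\ showing that the martingale part $M$ contributes nothing to the ``future beyond $t$ but before $\sigma$'' piece. The honest cleanest route is: on $\{\sigma > t\}$ pick the first zero $g_t := \inf\{u > t : Y_u = 0\}$ of $Y$ after $t$ (an $(\F_t)$-stopping time, finite on $\{\sigma>t\}$), observe $Y_{g_t} = 0$ so $M_{g_t} = -F_{g_t}$, and use optional stopping plus the fact that $F$ is continuous and carried by $\{Y=0\}$ to telescope $\E^\p(Y_\infty \1_{\{\sigma>t\}}\mid \F_t)$ down through the successive excursion endpoints, each contributing only an increment of $F$; passing to the limit (justified by class $(D)$) yields $\E^\p(F_\infty - F_{g_t}) + (\text{boundary } F\text{-mass})$, and since $F$ does not increase on $(t, g_t)$ this is $\E^\p(F_\infty - F_t \mid \F_t)$ on $\{\sigma>t\}$. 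Combining this with the trivial identity on $\{\sigma \le t\}$, where $F_\infty = F_t$ and $Y_\infty\1_{\{\sigma\le t\}}$ has $\F_t$-conditional expectation $\E^\p(M_\infty\1_{\{\sigma\le t\}}\mid\F_t) + F_t\1_{\{\sigma\le t\}}$, and reassembling, we recover $\E^\p(Y_\infty\1_{\{\sigma\le t\}}\mid\F_t) = M_t + F_t = Y_t$, which is the claim. A final remark: the hypotheses $M_0 = F_0 = 0$ and $Y_0 = 0$ (consistent with $\sigma \ge 0$) make the $t = 0$ case automatic and anchor the telescoping argument.
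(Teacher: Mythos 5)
The paper itself gives no proof of this lemma --- it is quoted from Az\'ema, Meyer and Yor \cite{relmart} --- so there is nothing to compare your route against; I can only judge the argument on its own terms. Your central idea is correct and is exactly the standard one: stop at the first zero $g_t:=\inf\{u>t:\ Y_u=0\}$ of $Y$ after $t$, use that $M$ is uniformly integrable (class $(D)$) and that $F$ does not grow on $(t,g_t)$. But you make this much harder than it is, and two of your intermediate claims are not right as stated. First, no ``telescoping through successive excursion endpoints'' is needed: with the convention $Y_{g_t}:=Y_\infty$ on $\{g_t=\infty\}$ one has $\{g_t=\infty\}=\{\sigma\le t\}$ and $Y_{g_t}=0$ on $\{g_t<\infty\}$ by continuity, hence $Y_{g_t}=Y_\infty\1_{\{\sigma\le t\}}$ identically; optional sampling for the u.i.\ martingale $M$ and the constancy of $F$ on $[t,g_t]$ then give
\[
\E^\p\bigl(Y_\infty\1_{\{\sigma\le t\}}\big|\F_t\bigr)=\E^\p\bigl(M_{g_t}\big|\F_t\bigr)+\E^\p\bigl(F_{g_t}\big|\F_t\bigr)=M_t+F_t=Y_t,
\]
with no splitting into $\{\sigma\le t\}$ and $\{\sigma>t\}$ at all. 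Second, your ``bridge'' identity $\{\sigma\le t\}=\{F_t=F_\infty\}$ a.s.\ is only obvious in one direction ($\sigma\le t\Rightarrow F_\infty=F_t$); the converse is asserted but not proved, and fortunately it is never needed. Third, your final reassembly is genuinely erroneous: you write $\E^\p(F_\infty\1_{\{\sigma\le t\}}|\F_t)=F_t\1_{\{\sigma\le t\}}$, but $\1_{\{\sigma\le t\}}$ is \emph{not} $\F_t$-measurable ($\sigma$ is not a stopping time --- that is the whole point of the lemma), so it cannot be pulled out of the conditional expectation; the pointwise identity $F_\infty\1_{\{\sigma\le t\}}=F_t\1_{\{\sigma\le t\}}$ only yields $\E^\p(F_\infty\1_{\{\sigma\le t\}}|\F_t)=F_t\,\p(\sigma\le t|\F_t)$. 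The subtraction route you state in the middle of the proposal (proving $\E^\p(Y_\infty\1_{\{\sigma>t\}}|\F_t)=\E^\p(F_\infty-F_t|\F_t)$ and subtracting from $\E^\p(Y_\infty|\F_t)=M_t+\E^\p(F_\infty|\F_t)$) is sound, but as written it rests on the vague telescoping sketch; replace that sketch by the single optional-stopping step above and the proof is complete.
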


Using Lemma \ref{sigmad} we can now give an example for an application of Theorem \ref{exhon}.

\begin{ex}
Let $B$ be a standard $(\p,\F_t)$-Brownian motion with $L$ denoting its local time at level zero. 
Set $\sigma=\sup\{\sigma\leq 1:\ B_t=0\}$. The submartingale
\[|B_{t\wedge 1}|=\int_0^{t\wedge1}\text{sgn}(B_u)dB_u+L_{t\wedge1}\]
fulfills the assumptions of Lemma \ref{sigmad} and is hence a relative martingale associated with $\sigma$. Setting $\rho=|B_1|$ we have for $t\leq 1$,
\begin{eqnarray*}
k_t&=&|B_{t}|=\int_0^{t}\text{sgn}(B_u)dB_u+L_t\\
\rho_t&=&\E^\p(\rho|\F_t)=\E^\p(|B_1||\F_t)=\int_{-\infty}^\infty\frac{|x+B_t|}{\sqrt{2\pi(1-t)}}\exp\left(-\frac{x^2}{2(1-t)}\right)dx\\
&=&|B_t|\cdot\left[2\Phi\left(\frac{|B_t|}{\sqrt{1-t}}\right)-1\right]+\sqrt{\frac{2(1-t)}{\pi}}\cdot\exp\left(-\frac{|B_t|^2}{2(1-t)}\right)\\
h_t&=&\rho_t-k_t=2|B_t|\cdot\left[\Phi\left(\frac{|B_t|}{\sqrt{1-t}}\right)-1\right]+\sqrt{\frac{2(1-t)}{\pi}}\cdot\exp\left(-\frac{|B_t|^2}{2(1-t)}\right)\\
dh_t&=&2\left[\Phi\left(\frac{|B_t|}{\sqrt{1-t}}\right)-1\right]\text{sgn}(B_t)dB_t+\text{finite variation part}.
\end{eqnarray*}
Thus according to Theorem \ref{exhon} the process
\[W_t:=B_t-\int_0^{t\wedge\sigma}\frac{\text{sgn}(B_s)\left[\Phi\left(\frac{|B_s|}{\sqrt{1-s}}\right)-1\right]ds}{|B_s|\cdot\left[\Phi\left(\frac{|B_s|}{\sqrt{1-s}}\right)-1\right]+\sqrt{\frac{1-s}{2\pi}}\cdot\exp\left(-\frac{|B_s|^2}{2(1-s)}\right)}
+\int_{t\wedge \sigma}^{t\wedge 1}\frac{ds}{B_s}\]
is a $(\q,\G_t)$-Brownian motion.
\end{ex}

\subs{An example related to the specific structure of the Az\'ema supermartingale of an honest time}

It follows from a result of Az\'ema (cf.~the Th\'eor\`eme on page 300 in \cite{Azema}) that for an honest time $\sigma$ the dual predictable projection $A^\p$ of the process $\left(\1_{\{\sigma\leq t\}}\right)_{t\geq0}$  satisfies under the assumptions (AC), 
\[supp\left(dA^\p\right)\subset\left\{Z^\p=1\right\}.\]

This property was used in \cite{DoobMax} to derive the general structure of the Az\'ema supermartingale associated with an honest time under (AC), cf.~Theorem 4.1 in \cite{DoobMax}:

\begin{lem}\label{Max}
For an honest time $\sigma$ there exists a non-negative local $(\p,\F_t)$-martingale $(N_t)_{t\geq0}$ with $N_0=1$ and $N_t\ra0$ $\p$-a.s.~such that 
\[Z_t^\p=\p(\sigma>t|\F_t)=\frac{N_t}{\ol{N}_t}.\]
\end{lem}

Therefore, by the same reasoning as in Example \ref{nnn}, the It\^o-Watanabe decomposition of $Z^\p$ is given by
\[Z_t^\p=N^\p_tD_t^\p\quad\text{with}\quad N^\p_t=N_t \quad\text{and}\quad D_t^\p=\frac{1}{\ol{N}_t}=\frac{1}{\ol{N}^\p_t},\quad t\geq0.\]

\begin{lem}\label{logN}
Let $\sigma$ be an honest time and denote by $Z_t^\p=N^\p_t/\ol{N}^\p_t$ the multiplicative decomposition of $Z_t^\p=\p(\sigma>t|\F_t)$ given in Lemma \ref{Max}. Then 
$A_\sigma^\p=A_\infty^\p$ and for all $t,x\geq0$,
\[\p\left(\left.A^\p_\sigma\in dx\right|\F_t\right)=N_t^\p e^{-x}dx\quad\text{on the set }\ \{x>A^\p_t\}.\]
\end{lem}

\begin{proof}
From Lemma 2.1 in \cite{DoobMax} we know that for $x>0$,
\[\p\left(\left.\sup_{s\geq t}N_s^\p>x\right|\F_t\right)=\left(\frac{N^\p_t}{x}\right)\wedge 1.\]
Moreover, it follows from Lemma \ref{Max} and It\^o's product formula that $A_t^\p=\log\left(\ol{N}^\p_t\right)$, which implies that 
\begin{eqnarray*}
\E^\p \left(A_\sigma^\p\right)&=&\E^\p\left(\int_0^\infty A_u^\p dA_u^\p\right)=\frac{1}{2}\cdot\E^\p \left(A_\infty^\p\right)^2
=\frac{1}{2}\cdot\E^\p \left(\log\left(\ol{N}^\p_\infty\right)\right)^2\\
&=&\frac{1}{2}\int_0^\infty \p \left(\log\left(\ol{N}^\p_\infty\right)^2>x\right)dx=\frac{1}{2}\int_0^\infty e^{-\sqrt{x}}dx=1=\E^\p\left(A^\p_\infty\right).
\end{eqnarray*}
Hence, since $A$ is non-decreasing, we must have $A_\sigma^\p=A_\infty^\p$ a.s. Therefore,
\[\p\left(\left.A^\p_\sigma>x\right|\F_t\right)=\p\left(\left.A^\p_\infty>x\right|\F_t\right)=\p\left(\left.\ol{N}^\p_\infty>e^x\right|\F_t\right)=\1_{\{\ol{N}^\p_t>e^x\}}+\1_{\{\ol{N}^\p_t\leq e^x\}}N_t^\p e^{-x}.\]
\end{proof}

Lemma \ref{logN} allows us to provide an example of an interesting class of measure changes, which - even though they may have different effects on a given $(\F_t)$-local martingale $U$ in the filtration $(\F_t)$ - yield the same $(\G_t)$-semimartingale decomposition of $U$ up to time $\sigma$:

\begin{ex}\label{hsup}
Suppose that $\sigma$ is an honest time and let $f:\R_+\ra\R_+$ be any measurable function such that $\int_0^\infty f(x)e^{-x}dx=1$. Then by Lemma \ref{logN},
\begin{eqnarray*}
h_t&=&\E^\p\left(\left.f\left(A^\p_\sigma\right)\1_{\{\sigma>t\}}\right|\F_t\right)=\E^\p\left(\left.\int_t^\infty f\left(A^\p_u\right)dA^\p_u\right|\F_t\right)= 
\E^\p\left(\left.\int_{A^\p_t}^{A^\p_\infty} f(x)dx\right|\F_t\right)\\
&=&\E^\p\left(\left.\int_{A^\p_t}^{A^\p_\sigma} f(x)dx\right|\F_t\right)=N_t^\p\int_{A^\p_t}^\infty\int_{A^\p_t}^yf(x)dx\ e^{-y}dy=
N_t^\p\int_{A^\p_t}^\infty\int_{x}^\infty e^{-y}dyf(x)dx\\ 
&=&N_t^\p\int_{A^\p_t}^\infty f(x)e^{-x}dx,
\end{eqnarray*}
\begin{eqnarray*}
dh_t&=&\int_{A^\p_t}^\infty f(x)e^{-x}dx\ dN^\p_t-N^\p_tf\left(A^\p_t\right)e^{-A^\p_t}dA^\p_t,\\
d\mu_t&=&\frac{dh_t}{h_t}=\frac{dN^\p_t}{N^\p_t}-\frac{f\left(A^\p_t\right)e^{-A^\p_t}dA^\p_t}{\int_{A^\p_t}^\infty f(x)e^{-x}dx}=\frac{dZ^\p_t}{Z^\p_t}+d\log\left(\int_{A^\p_t}^\infty f(x)e^{-x}dx\right),\\
k_t&=&\E^\p\left(\left.f\left(A^\p_\sigma\right)\1_{\{\sigma\leq t\}}\right|\F_t\right)=\E^\p\left(\left.f\left(A^\p_t\right)\1_{\{\sigma\leq t\}}\right|\F_t\right)=f\left(A^\p_t\right)\left(1-Z_t^\p\right),\\
dk_t&=&-f\left(A^\p_t\right)dZ_t^\p+\left(1-Z_t^\p\right)d\left[f\left(A^\p_t\right)\right]=-f\left(A^\p_t\right)dZ_t^\p.
\end{eqnarray*}
Applying Theorem \ref{exhon} we see that given a continuous local $(\p,\F_t)$-martingale $U$ the process 
\[V_{t\wedge\sigma}:=U_{t\wedge\sigma}-\int_0^{t\wedge\sigma}\frac{d\langle Z^\p,U\rangle_s}{Z^\p_s}+\int_\sigma^{\sigma\vee t}\frac{d\langle Z^\p,U\rangle_s}{1-Z_s^\p},\quad t\geq0,\] 
is a local $(\q,\G_t)$-martingale for all measures $\q$ defined as above and associated with a function $f$ satisfying $\int_0^\infty f(x)e^{-x}dx=1$. We thus note that the semimartingale decomposition of $U$ in the filtration $(\G_t)$ does {\it not} depend on $f$. 
However, note that the $(\F_t)$-Radon-Nykodim density of $\q$ with respect to $\p$ does depend on $f$:
\[\rho_t=h_t+\E^\p\left(\left.f\left(A_\sigma^\p\right)\1_{\{\sigma\leq t\}}\right|\F_t\right)=N_t^\p\int_{A^\p_t}^\infty f(x)e^{-x}dx+f\left(A_t^\p\right)\left(1-Z_t^\p\right).\]
\end{ex}

\s{Changes of measure up to pseudo-stopping times}\label{pst}

In this section we focus on a special class of random times called pseudo-stopping times, which were introduced in \cite{pseudo}.

\begin{df}
A positive random variable $\sigma:(\Omega,\F)\ra(\R_+,\mathcal{B}(\R_+))$ is called a $(\p,\F_t)$-pseudo-stopping time if $\E^\p M_\sigma=\E^\p M_0$ for every uniformly integrable \mbox{$(\p,\F_t)$-martingale $M$.}
\end{df}

In \cite{pseudo} it is shown that pseudo-stopping times can be characterized in many different ways:

\begin{thm}\label{pseudo}
The following are equivalent:
\be
\item [(1)] $\sigma$ is a $(\p,\F_t)$-pseudo stopping time.
\item [(2)] $A^\p_\infty\equiv1$ almost surely.
\item [(3)] $A^\p_\sigma\sim\mathcal{U}[0,1]$.
\item [(4)] For any local $(\p,\F_t)$-martingale $M=(M_t)_{t\geq0}$, the process $(M_{t\wedge\sigma})_{t\geq0}$ is a local $(\p,\G_t)$-martingale.
\item [(5)] $Z^\p=1-A^\p$ is a decreasing $(\F_t)$-predictable process.
\ee
\end{thm}

\begin{proof}
The equivalence between (1), (2), (4) and (5) is shown in Theorem 1 of \cite{pseudo}, while the implication (1)$\RA$(3) is a direct consequence of Proposition 2 of \cite{pseudo}. Finally, the relation $(2)\LRA(3)$ follows from the general relation between the Laplace transforms of $A^\p_\sigma$ and $A^\p_\infty$: Indeed, since $(A^\p_t)$ is the dual predictable projection of $(\1_{\{\sigma\leq t\}})$, we have
\[\lambda\cdot \E^\p\left( e^{-\lambda A^\p_\sigma}\right)=\lambda\cdot\E^\p\left(\int_0^\infty e^{-\lambda A^\p_u}dA^\p_u\right)
=1-\E^\p \left(e^{-\lambda A^\p_\infty}\right),\quad\lambda>0.\]
\end{proof}

\subs{First results}

We immediately derive the following lemma.

\begin{lem}\label{M}
Let $\sigma$ be a $(\p,\F_t)$-pseudo-stopping time and suppose that $\rho=M_\sigma$, where $M$ is a strictly positive uniformly integrable $(\p,\F_t)$-martingale starting from $M_0=1$. Then
\[V_{t\wedge\sigma}=U_{t\wedge\sigma}-\int_0^{t\wedge\sigma}\frac{d\langle M,U\rangle_s}{M_s}\] 
is a local $(\q,\G_t)$-martingale.
\end{lem}

\begin{proof}
By the pseudo-stopping time property, $\E^\p M_\sigma=\E^\p M_0=1$. Thus, $\rho$ is well-defined. Moreover due to part (4) of Theorem \ref{pseudo}, given any local $(\F_t)$-martingale $U$ the process $(U_{t\wedge\sigma})$ is a local $(\G_t)$-martingale. Especially, $(M_{t\wedge\sigma})$ is a local $(\G_t)$-martingale closed by $M_\sigma$ and hence a uniformly integrable $(\G_t)$-martingale. Therefore the usual Girsanov applied in the enlarged filtration implies that
\[V_{t\wedge\sigma}=U_{t\wedge\sigma}-\int_0^{t\wedge\sigma}\frac{d\langle M,U\rangle_s}{M_s}\] 
is a local $(\q,\G_t)$-martingale.
\end{proof}

\begin{rem}
Alternatively, Lemma \ref{M} can also be proven by applying Corollary \ref{cor1} with
\[h_t=\E^\p\left(M_\sigma\1_{\{\sigma>t\}}|\F_t\right)=\E^\p\left(M_{\sigma\wedge t}\1_{\{\sigma>t\}}|\F_t\right)\\
=M_t\cdot\p(\sigma>t|\F_t)=M_t(1-A_t^\p).\]
Note that we cannot choose $\rho=M_\infty$ instead of $\rho=M_\sigma$ in Lemma \ref{M} because in general  $\E^\p(M_\infty|\G_\sigma)\neq M_\sigma$ unless $\sigma$ is a stopping time. Also, generally $\rho_t\neq M_t$, i.e.~$M$ is {\it not} the Radon-Nikodym density of $\q$ with respect to $\p$ in the filtration $(\F_t)$. 
\end{rem}

The next example generalizes Example 2 in \cite{Mortimer} and provides us with a class of non-trivial measure changes up to a pseudo-stopping time $\sigma$ which do not affect $(\F_t)$-martingales. 

\begin{ex}\label{pst1}
Let $\sigma$ be a $(\p,\F_t)$-pseudo-stopping time and let $f:\R_+\ra\R_+$ be any measurable function satisfying $\int_0^1f(x)dx=1$. We choose $\rho=f\left(A_\sigma^\p\right)$. Then 
\begin{eqnarray*}
h_t&=&\E^\p\left(f\left(A_\sigma^\p\right)\1_{\{\sigma>t\}}|\F_t\right)=\E^\p\left(\left.\int_t^\infty f\left(A^\p_u\right)dA_u^\p\right|\F_t\right)=\int^1_{A_t^\p} f(x)dx,\\
dh_t&=&-f\left(A_t^\p\right)dA_t^\p,\\
d\mu_t&=&\frac{dh_t}{h_t}=\frac{-f\left(A_t^\p\right)dA_t^\p}{\int_{A_t^\p}^1 f(y)dy}=-d\mu_t^F.
\end{eqnarray*}
By Corollary \ref{cor1}, for every continuous local $(\p,\F_t)$-martingale $U$ the process $(U_{t\wedge\sigma})_{t\geq0}$ is a local $(\q,\G_t)$-martingale because $\langle U,\mu\rangle_{t\wedge\sigma}=-\langle U,\mu^F\rangle_{t\wedge\sigma}=0$ for all \mbox{$t\geq0$.} 
Note that this particular choice of $\rho$ does not have any effect on continuous $(\F_t)$-martingales until time $\sigma$: $(U_{t\wedge\sigma})$ is a local $(\q,\G_t)$-martingale for {\it any} choice of $f$ satisfying $\int_0^1f(x)dx=1$.
\end{ex}

\vskip6pt

As opposed to honest times, the definition of a $\p$-pseudo-stopping time depends on the underlying probability measure. The following example shows that the pseudo-stopping time property may indeed get lost under an equivalent change of probability measure.

\begin{ex}
Let $\sigma$ be an $\F_\infty$-measurable $(\p,\F_t)$-pseudo-stopping time and define the random variable $\rho=2Z_\sigma^\p\in\F_\infty$.
Since $Z_\sigma^\p\sim\mathcal{U}[0,1]$ by Theorem \ref{pseudo}, the measure $\q=\rho.\p$ is well-defined and equivalent to $\p$. We have
\begin{eqnarray*}
\tilde{\rho}_t&=&2\E^\p\left(\left.Z_\sigma^\p\right|\G_t\right)=2\1_{\{\sigma\leq t\}}Z_\sigma^\p+2\1_{\{\sigma>t\}}\frac{\E^\p\left(Z_\sigma^\p\1_{\{\sigma>t\}}|\F_t\right)}{Z_t^\p}\\
&=&2\1_{\{\sigma\leq t\}}Z^\p_\sigma+2\1_{\{\sigma>t\}}\frac{\E^\p\left(\left.\int_t^\infty \left(1-A_u^\p\right)dA_u^\p\right|\F_t\right)}{Z_t^\p}\\
&=&2\1_{\{\sigma\leq t\}}Z_\sigma^\p+\1_{\{\sigma>t\}}\frac{\left(1-A_t^\p\right)^2}{Z_t^\p}=
Z^\p_{t\wedge\sigma}+\1_{\{\sigma\leq t\}}Z^\p_\sigma,
\end{eqnarray*}
which jumps at time $\sigma$. Moreover,
\begin{eqnarray*}
h_t=\E^\p\left(\tilde{\rho}_t\1_{\{\sigma>t\}}|\F_t\right)=Z_t^\p\cdot\E^\p\left(\1_{\{\sigma>t\}}|\F_t\right)=\left(Z_t^\p\right)^2.
\end{eqnarray*}
Since $\rho=\E^\p(\rho|\F_\infty)=\rho_\infty\neq1$ almost surely, the continuous uniformly integrable martingale $(\rho_t)$ is not identical to one. Therefore, having in mind that $Z^\p$ is of finite variation,
\[Z_t^\q=\frac{h_t}{\rho_t}=\frac{\left(Z_t^\p\right)^2}{\rho_t}\]
cannot be of finite variation, which implies that $\sigma$ is \textit{not} a $\q$-pseudo-stopping time.
\end{ex}

However, the pseudo-stopping time property of $\sigma$ is in many cases very desirable, since local martingales remain local martingales in the progressively enlarged filtration until time $\sigma$ and therefore semimartingale decompositions do not change. Hence, it is an interesting mathematical question whether there exist equivalent changes of probability measures which preserve the pseudo-stopping time property. \vskip6pt

The following example of such a measure change is taken from the credit risk literature and is known as the so called Cox construction.

\begin{ex}\label{Cox}
Assume that there exists a random variable $U$ which is independent of $\F_\infty$ such that $\p(U>t)=\exp(-t)$ for all $t\geq0$. Let $(\Lambda_t)$ be an $(\F_t)$-adapted continuous increasing process with $\Lambda_\infty=\infty$ a.s.~and define
\[\sigma:=\inf\{t\geq0:\ \Lambda_t\geq U\}.\]
Then $\sigma$ is a $\p$-pseudo-stopping time because
\[Z_t^\p=\p(\sigma> t|\F_t)=\p(\Lambda_t< U|\F_t)=\exp(-\Lambda_t).\]
Let $\rho\in\F_\infty$ be a strictly positive random variable with $\E^\p\rho=1$, defining the equivalent measure $\q:=\rho.\p$. Then 
\begin{eqnarray*}
Z_t^\q&=&\frac{h_t}{\rho_t}=\frac{\E^\p(\E^\p(\rho\1_{\{\sigma>t\}}|\F_\infty)|\F_t)}{\rho_t}=
\frac{\E^\p(\rho\cdot \p(\Lambda_t<U|\F_\infty)|\F_t)}{\rho_t}\\
&=&\frac{\E^\p(\rho\exp(-\Lambda_t)|\F_t)}{\rho_t}=\exp(-\Lambda_t).
\end{eqnarray*}
Hence, $\sigma$ is a $\p$- and $\q$-pseudo-stopping time. Moreover, $Z^\q=Z^\p$ almost surely. In fact, one can show that in this case the filtration $(\F_t)$ is immersed in the larger filtration $(\G_t)$. 
In a financial setting, if we assume that the stock price is an $(\F_t)$-adapted process, then any equivalent local martingale measure (cf.~Def.~\ref{dual} below) can indeed be identified with some $\rho\in\F_\infty$ and will hence remain an equivalent martingale measure in the enlarged filtration if $\sigma$ is constructed as above. 
\end{ex}

Characterizing for an arbitrary $\p$-pseudo stopping time the class of measures changes that preserve the pseudo-stopping time is a quite challenging and unsolved problem, which is beyond the scope of this paper. However, in the next subsection we will approach this problem for a certain subclass of pseudo-stopping times and construct a non-trivial class of measure changes that have the desired property.

\subs{A special class of pseudo-stopping times}

In section 3.2 of \cite{pseudo} the authors provide a systematic construction of pseudo-stopping times under the assumptions (AC):

\begin{lem}\label{inf}
Let $L$ be an honest time with associated Az\'ema supermartingale $Z^L$ under $\p$. Then
\[\sigma:=\sup\left\{t<L:\ Z_t^L=\ul{Z}_L^L\right\}=\sup\left\{t<L:\ Z_t^L=\ul{Z}_t^L\right\} \]
is a $\p$-pseudo-stopping time and its Az\'ema supermartingale is given by
\[Z^\sigma_t:=\p(\sigma>t|\F_t)=\ul{Z}_t^L,\quad t\geq0.\]
\end{lem}

In this chapter we will only consider pseudo-stopping times which are constructed via Lemma \ref{inf}. The advantage of this is that we have more specific knowledge about the structure of the associated Az\'ema supermartingale. Let us look at an example.

\begin{ex}\label{b}
We suppose that the filtered probability space $(\Omega,\F,(\F_t),\p)$ satisfies the Parthasarathy condition $(P)$ and that there exists a $(\p,\F_t)$-Brownian motion $B$ on it. Let us define for all $a\in\R$ and $s\geq0$ the stopping time
\[\tau_s^a:=\inf\{t>s:\ B_t=a\}\]
as well as the random times
\[L:=\sup\{t<\tau_0^1:\ B_t=0\},\quad \sigma:=\sup\{t<L:\ \ol{B}_t=B_t\}=\sup\{t<L:\ \ol{B}_L=B_t\}.\]
Then 
\[Z_t^{\p,L}:=\p(L>t|\F_t)=1-B^+_{t\wedge\tau_0^1},\] 
cf.~VI.7.12 in \cite{RW}, which implies that $\sigma$ is a $(\p,\F_t)$-pseudo-stopping time, cf.~Lemma \ref{inf}. Indeed, this is the original example of a pseudo-stopping time provided by D.~Williams, cf.~\cite{Williams}. Let $b:\R\ra\R$ be a bounded function and set
\[\rho_t=\mathcal{E}\left(\int_0^tb(B_s)dB_s\right).\]
By Novikov's criterion $(\rho_t)_{t\geq0}$ is a positive $(\p,\F_t)$-martingale which defines - due to condition $(P)$  - a measure $\q$ on $\F_\infty$ such that
\[\left.\frac{d\q}{d\p}\right|_{\F_t}=\rho_t,\quad\forall\ t\geq0.\]
Note that in general $\q$ is only locally equivalent to $\p$, i.e.~it may be singular to $\p$ on $\F_\infty$. 
By Girsanov's theorem the process
\[W_t:=B_t-\int_0^tb(B_s)ds\]
is a $\q$-Brownian motion and $B$ is an It\^o-diffusion. We denote its $\q$-scale function by $s(\cdot)$. 
Using the Markov property of $B$ we compute the $\q$-Az\'ema supermartingale of $L$ as
\begin{equation}\label{2}
Z^{L,\q}_t:=\q(L>t|\F_t)=\1_{\{\tau_0^1>t\}}\q\left(\tau_t^1>\tau_t^0|\F_t\right)=\frac{s(1)-s\left(B_{t\wedge\tau_0^1}\right)}{s(1)-s(0)}\wedge 1.
\end{equation}
Since $s$ is a strictly increasing function, 
\begin{eqnarray*}
\sigma=\sup\{t<L:\ \ol{B}_L=B_t\}=\sup\left\{t<L:\ s\left(\ol{B}_L\right)=s(B_t)\right\}=\sup\left\{t<L:\ \ul{Z}_L^{L,\q}=Z_t^{L,\q}\right\}.
\end{eqnarray*}
According to Lemma \ref{inf}, $\sigma$ is thus also a $\q$-pseudo-stopping time.
\end{ex}

In the above example the measure change is chosen such that the Az\'ema supermartingale of the honest time $L$ under $\q$ is a monotone transformation of the Az\'ema supermartingale of $L$ under $\p$, cf.~equation (\ref{2}). This ensures that they both attain their infimum at the same time. Hence, the $\p$-pseudo stopping time associated with $L$ via Lemma \ref{inf} is identical to the $\q$-pseudo stopping time associated with $L$ via Lemma \ref{inf}. However, the construction is tailor-made for this specific example dealing with homogeneous diffusions and cannot easily be generalized. \\

The following theorem provides a class of measure changes which preserve the pseudo-stopping time property of a $\p$-pseudo-stopping time constructed via Lemma \ref{inf}. The idea is again that under the new measure $\q$ the Az\'ema supermartingale of the underlying honest time $L$ should be a monotone transformation of the Az\'ema supermartingale of $L$ under the measure $\p$.

\begin{thm}\label{gh}
Let $L$ be an honest time with Az\'ema supermartingale \mbox{$Z_t^{\p,L}:=\p(L>t|\F_t)$} and define the $\p$-pseudo-stopping time
\[\sigma:=\sup\left\{t<L:\ Z_t^{\p,L}=\ul{Z}^{\p,L}_L\right\}.\]
Moreover, let $g:[0,1]\ra\R$ be a Lebesgue-integrable function which satisfies 
\[\int_0^1\exp\left(\int_z^1g(y)dy\right)dz=1.\] 
We may write $Z_t^{\p,L}=N^\p_t/\ol{N}^\p_t$ for some non-negative local $\p$-martingale $N^\p$ with $N^\p_0=1$, converging to zero almost surely. If the process
\[\rho_t:=\mathcal{E}\left(\int_0^tg\left(\frac{N^\p_s}{\ol{N}^\p_s}\right)\frac{dN^\p_s}{2\ol{N}^\p_s}\right)\]
is a uniformly integrable $(\p,\F_t)$-martingale, then  $\sigma$ is also a $\q$-pseudo-stopping time with respect to the measure $\q:=\rho_\infty.\p$. 
\end{thm}

\begin{proof}
We set $\q=\rho_\infty.\p$ and define
\[M_t:=h\left(\frac{N^\p_t}{\ol{N}^\p_t}\right)\cdot\ol{N}^\p_t,\quad t\geq0,\]
where $h:[0,1]\ra\R_+$ is the function
\[h(x)=\int_0^x\exp\left(\int^1_zg(y)dy\right)dz.\]
Note that $h$ satisfies
\[g(x)h'(x)+h''(x)=0, \quad h(1)=h'(1)=1, \quad h(0)=0.\]
This implies that $M$ is a local $(\q,\F_t)$-martingale. Indeed, by Girsanov's theorem
\[\widetilde{N}_t:=N^\p_t-\int_0^tg\left(\frac{N^\p_s}{\ol{N}^\p_s}\right)\frac{d\left\langle N^\p\right\rangle_s}{2\ol{N}^\p_s}\]
is a local $(\q,\F_t)$-martingale and 
\begin{eqnarray*}
dM_t&=&h\left(\frac{N^\p_t}{\ol{N}^\p_t}\right)d\ol{N}^\p_t+\ol{N}^\p_th'\left(\frac{N_t^\p}{\ol{N}^\p_t}\right)\left[\frac{dN^\p_t}{\ol{N}^\p_t}-\frac{d\ol{N}^\p_t}{\ol{N}^\p_t}\right]+\frac{1}{2}h''\left(\frac{N^\p_t}{\ol{N}^\p_t}\right)\frac{d\left\langle N^\p\right\rangle_t}{\ol{N}^\p_t}\\
&=&[h(1)-h'(1)]d\ol{N}^\p_t+h'\left(\frac{N^\p_t}{\ol{N}^\p_t}\right)d\widetilde{N}_t=h'\left(\frac{N^\p_t}{\ol{N}^\p_t}\right)d\widetilde{N}_t,
\end{eqnarray*}
because $supp\left(d\ol{N}^\p\right)\subset\left\{N^\p=\ol{N}^\p\right\}$. Furthermore, $h$ is strictly increasing with $h(1)=1$. Therefore, $\ol{N}^\p=\ol{M}$ and
\[L=\sup\left\{t>0:\ N^\p_t=\ol{N}^\p_t\right\}=\sup\left\{t>0:\ M_t=\ol{M}_t\right\}.\]
Since $M_t\ra0$ almost surely, 
\[Z^{\q,L}_t:=\q(L>t|\F_t)=\frac{M_t}{\ol{M}_t}=h\left(\frac{N^\p_t}{\ol{N}^\p_t}\right)\frac{\ol{N}^\p_t}{\ol{M}_t}=h\left(\frac{N^\p_t}{\ol{N}^\p_t}\right)=h\left(Z_t^{\p,L}\right).\]
But then
\[\sigma=\sup\left\{t<L:\ Z^{\p,L}_t=\ul{Z}^{\p,L}_L\right\}=\sup\left\{t<L:\ Z^{\q,L}_t=\ul{Z}^{\q,L}_L\right\}\]
and $\sigma$ is a $\q$-pseudo stopping time by Lemma \ref{inf}.
\end{proof}

We now give an example of a function $g$ which fulfills the integrability condition required in Theorem \ref{gh}. 

\begin{ex}
Consider $g(x)=x-c$, where $c>0$ is chosen such that 
\[\int_0^1\exp\left(\frac{1-z^2}{2}-c(1-z)\right)dz=1.\] 
Using product integration,
\begin{eqnarray*}
\int_0^t\frac{N^\p_s}{\left(\ol{N}^\p_s\right)^2}dN^\p_s&=&
\left(\frac{N^\p_t}{\ol{N}^\p_t}\right)^2-1-\int_0^tN^\p_s\left(\frac{dN^\p_s}{\left(\ol{N}^\p_s\right)^2}-\frac{2N^\p_s}{\left(\ol{N}^\p_s\right)^3}d\ol{N}^\p_s\right)-\int_0^t\frac{d\left\langle N^\p\right\rangle_s}{\left(\ol{N}^\p_s\right)^2}\\
&\leq& -\int_0^t\frac{N^\p_s}{\left(\ol{N}^\p_s\right)^2}dN^\p_s+2\int_0^t\frac{d\ol{N}^\p_s}{\ol{N}^\p_s}\\
\LRA\quad\int_0^t\frac{N^\p_s}{\left(\ol{N}^\p_s\right)^2}dN^\p_s&\leq&\log\left(\ol{N}^\p_t\right),\\
X_t&:=&\int_0^t\frac{dN^\p_s}{\ol{N}^\p_s}=\frac{N^\p_t}{\ol{N}^\p_t}-1+\int_0^t\frac{N^\p_sd\ol{N}^\p_s}{\left(\ol{N}^\p_s\right)^2}=\frac{N^\p_t}{\ol{N}^\p_t}-1+\log\left(\ol{N}^\p_t\right)\geq -1,\\
Y_t&:=&\int_0^tg\left(\frac{N^\p_s}{\ol{N}^\p_s}\right)\frac{dN^\p_s}{\ol{N}^\p_s}=\int_0^t\frac{N^\p_sdN^\p_s}{\left(\ol{N}^\p_s\right)^2}-c\int_0^t\frac{dN^\p_s}{\ol{N}^\p_s}\leq c+\log\left(\ol{N}^\p_t\right)\\
&\leq& c+\log\left(\ol{N}^\p_\infty\right).
\end{eqnarray*}
First note that $X=(X_t)$ is a uniformly integrable martingale bounded from below, since
\[\E^\p X_\infty=0-1+\E^\p\log\left(\ol{N}^\p_\infty\right)=0-1+1=0,\]
where we have used the fact that $\log\left(\ol{N}^\p_\infty\right)\sim\text{Exp}(1)$, cf.~Lemma \ref{logN}. Moreover,
\[\sup_{t\geq0}\ \E^\p X_t^2\leq \E^\p\left(1+\log\left(\ol{N}^\p_\infty\right)\right)^2=\int_0^\infty(1+x)^2e^{-x}dx=5.\]
Therefore $X$ is square-integrable and
\[\E^\p\left\langle Y\right\rangle_\infty=
\E^\p\int_0^\infty g^2\left(\frac{N^\p_t}{\ol{N}^\p_t}\right)d\langle X\rangle_t\leq (1+c)^2\cdot\E^\p\langle X\rangle_\infty<\infty.\]
By the Burkholder-Davis-Gundy inequality thus $\E^\p \sup_{t\geq0}|Y_t|<\infty$ and the dominated convergence theorem yields the martingality of $Y=(Y_t)$. Moreover for all $t\geq0$,
\[\E^\p\exp\left(\frac{Y_t}{2}\right)\leq e^{c/2}\cdot\E^\p\exp\left(\frac{\log\left(\ol{N}^\p_\infty\right)}{2}\right)
=e^{c/2}\cdot\E^\p\sqrt{\ol{N}^\p_\infty}=e^{c/2}\cdot\int_0^1\frac{dx}{\sqrt{x}}=2e^{c/2}.\]
Hence, by Jensen's inequality $(\exp(Y_t/2))_{t\geq0}$ is a uniformly integrable submartingale and Kazamaki's criterion implies the uniform integrability of $(\rho_t)$.
\end{ex}

\s{Generalization of Example 1 from \cite{Mortimer}}\label{generalize}

In this section we come back to \cite{Mortimer}, which was the starting point of this paper. The goal is to
generalize Example 1 of \cite{Mortimer}, which is related to the path decomposition of the Brownian motion. Since in that example $\sigma$ is an honest time, we are able to extend the measure change beyond time $\sigma$ using Theorem \ref{exhon}. Moreover, in \cite{Mortimer} the authors do a Markovian study of their example. However, as it turns out their example is related to the construction in Lemma \ref{inf}, which allows us to look at it from a different angle and to extend it to general honest times. The construction is as follows:\\

Let $\sigma$ be an honest time. In this subsection its Az\'ema supermartingale with respect to $\p$ will be denoted by $Z^\sigma_t=\p(\sigma>t|\F_t)$ with Doob-Meyer decomposition $Z^\sigma_t=m^\sigma_t-A^\sigma_t$. 
Then by Lemma \ref{inf},
	\[\pi=\sup\left\{t<\sigma:\ Z_t^\sigma=\ul{Z}_\sigma^\sigma\right\} \]
is a $\p$-pseudo-stopping time and $Z_t^\pi:=\p(\pi>t|\F_t)=\inf_{u\leq t}Z^\sigma_u=\ul{Z}^\sigma_t=:1-A^\pi_t$ for all $t\geq0$.
From Theorem \ref{pseudo} we know that $A_\pi^\pi$ is uniformly distributed and we may define $\rho:=f\left(A_\pi^\pi\right)$ 
for some $f\in\mathcal{C}^1[0,1],\ f>0,$ with $\int_0^1f(x)dx=1$. Then $\E^\p\rho=1$ and we have
\[h_t=\E\left(\rho\1_{\{\sigma>t\}}|\F_t\right)=\E\left(f(A^\pi_\pi)\1_{\{\sigma>t\}}|\F_t\right)
=\E\left(f(A^\pi_\pi)\1_{\{\sigma>t\geq\pi\}}|\F_t\right)+\E\left(f(A^\pi_\pi)\1_{\{\pi>t\}}|\F_t\right).\]
The second term on the RHS has already been computed in Example \ref{pst1} as
\[\E\left(f(A^\pi_\pi)\1_{\{\pi>t\}}|\F_t\right)=\int_{A_t^\pi}^1f(x)dx.\]
Concerning the first term we have
	\[\p(\sigma>t\geq\pi|\ \F_t)=\p(\sigma>t|\F_t)-\p(\pi>t|\F_t)=Z_t^\sigma-Z_t^\pi\]
and
\begin{eqnarray*}
\E\left(f(A^\pi_\pi)\1_{\{\sigma>t\geq\pi\}}|\F_t\right)&=&\E\left(f(1-\ul{Z}^\sigma_\pi)\1_{\{\sigma>t\geq\pi\}}|\F_t\right)
=\E\left(f(1-\ul{Z}^\sigma_t)\1_{\{\sigma>t\geq\pi\}}|\F_t\right)\\
&=&f(1-\ul{Z}^\sigma_t)\cdot(Z_t^\sigma-Z_t^\pi)
=f(A^\pi_t)\cdot(Z_t^\sigma-Z_t^\pi).
\end{eqnarray*}
Hence,
\begin{eqnarray*}
h_t&=&\int_{A_t^\pi}^1f(x)dx+f(A^\pi_t)(Z_t^\sigma-Z_t^\pi)\\
dh_t&=&-f(A_t^\pi)dA_t^\pi+f'(A_t^\pi)(Z_t^\sigma-Z_t^\pi)dA_t^\pi+f(A_t^\pi)(dZ_t^\sigma-dZ_t^\pi).
\end{eqnarray*}
Since $1-A_t^\pi=Z_t^\pi=\ul{Z}_t^\sigma$, we have $supp(dA_t^\pi)\subset\{Z_t^\pi=Z_t^\sigma\}$, which implies that
\begin{eqnarray*}
dh_t=f(A_t^\pi)(dZ_t^\sigma-dZ_t^\pi-dA_t^\pi)=f(A_t^\pi)dZ_t^\sigma.
\end{eqnarray*}
Therefore, 
\begin{eqnarray*}
d\mu_t=\frac{dh_t}{h_t}=\frac{f(A_t^\pi)dZ_t^\sigma}{\int_{A_t^\pi}^1f(x)dx +f(A_t^\pi)(Z_t^\sigma-Z_t^\pi)},\qquad
d\mu_t^F=\frac{f(A_t^\pi)dA_t^\sigma}{\int_{A_t^\pi}^1f(x)dx +f(A_t^\pi)A_t^\pi},
\end{eqnarray*}
where we used that $supp(dA_t^\sigma)\subset\{Z_t^\sigma=1\}$, cf.~Lemma \ref{logN}.\\

Since $\sigma$ is honest, there exists for all $t>0$ an $\F_t$-measurable random variable $\sigma_t$ such that $\sigma=\sigma_t$ on $\{\sigma<t\}$. In fact we may choose $\sigma_t=\sup\{u\leq t:\ Z^\sigma_u=1\}$ because $\sigma=\sup\{u\geq0:\ Z^\sigma_u=1\}$ according to Lemme (5,2) of \cite{Jeulin}. Since
\[A_\pi^\pi=1-Z^\pi_\pi=1-\ul{Z}^\sigma_\pi=1-\ul{Z}^\sigma_\sigma=1-Z^\pi_\sigma=A^\pi_\sigma,\]
this implies that
\begin{eqnarray*}
k_t&=&\E^\p\left(f\left(A^\pi_\pi\right)\1_{\{\sigma\leq t\}}|\F_t\right)=\E^\p\left(f\left(A^\pi_\sigma\right)\1_{\{\sigma\leq t\}}|\F_t\right)=f\left(A^\pi_{\sigma_t}\right)(1-Z^\sigma_t)\\
dk_t&=&-f\left(A^\pi_{\sigma_t}\right)dZ^\sigma_t+\1_{\{\sigma_t=t\}}(1-Z_t^\sigma)df(A^\pi_t)=-f\left(A^\pi_{\sigma_t}\right)dZ^\sigma_t.
\end{eqnarray*}
Therefore, on $\{\sigma\leq t\}$ we have $k_t=f(A^\pi_\sigma)(1-Z_t^\sigma)$. We may now apply Theorem \ref{exhon} to conclude that for any local $(\p,\F_t)$-martingale $U$ the process 
	\[V_t=U_{t}-\int_0^{t\wedge\sigma}\frac{f(A_s^\pi)d\langle m^\sigma,U\rangle_s}{\int_{A_s^\pi}^1f(x)dx +f(A_s^\pi)(Z_s^\sigma-Z_s^\pi)}+\int_\sigma^{\sigma\vee t}\frac{d\langle m^\sigma,U\rangle_s}{1-Z^\sigma_s}\]
is a local $(\q,\G_t)$-martingale. The result is not surprising, of course, since \mbox{$\rho=A^\pi_\pi=A^\pi_\sigma\in\G_\sigma$.} Therefore the measure change has no effect after $\sigma$ and we do indeed recover the usual term from the enlargement formula under $\p$ on the interval $[\sigma\wedge t,t]$, which can for example be found in~ \cite{Jeulin}, Th\'eor\`eme (5,10). \\

Let us briefly recall Example 1 from \cite{Mortimer} to see how it fits in the above framework.

\begin{ex}\label{bpd}
For a standard Brownian motion $B$ one defines the random times
\[\sigma=\sup\{t<T_1^B:\ B_t=0\},\qquad \pi=\sup\{t<\sigma:\ B_t=\ol{B}_t\},\]
i.e.~$\sigma$ is the time of the last zero of $B$ before it first hits one, and $\pi$ is the last time at which $B$ reaches its supremum before $\sigma$. From Example \ref{b} we know that $\sigma$ is an honest time with $Z_t^\sigma=\p(\sigma>t|\F_t)=1-B_{t\wedge T_1^B}^+$ and that $\pi$ is a pseudo-stopping time constructed via Lemma \ref{inf} with $Z_t^\pi=1-\ol{B}_{t\wedge T_1^B}$. In this case $A_\pi^\pi=\ol{B}_\sigma$ and the above calculations combined with L\'evy's theorem show that the process
\[W_t:=B_t+\int_0^{t\wedge\sigma}\frac{\1_{\{B_t>0\}}f(\ol{B}_t)dt}{\int^1_{\ol{B}_t}f(y)dy+f(\ol{B}_t)(\ol{B}_t-B_t^+)}-\int_{\sigma}^{t\wedge T_1^B}\frac{dt}{B_t}\]
is a $(\q,\G_t)$-Brownian motion on the interval $\left[0,T_1^B\right]$. Especially, if we choose $\rho\equiv1$ we recover the well-known path decomposition result of the standard Brownian motion due to Williams.
\end{ex}

\s{No arbitrage up to a random time}\label{finance}

In this section we fix a filtered probability space $\left(\Omega,\F,(\F_t)_{t\geq0},\p\right)$ satisfying assumption $(C)$, on which we model a financial market consisting of a riskfree bond and a risky stock $(S_t)$, which is assumed to be a continuous $(\F_t)$-semimartingale. For simplicity, we assume that the interest rate is equal to zero and that $\F=\F_\infty:=\bigvee_{t\geq0}\F_t$.

For the reader's convenience we first repeat some notions commonly used in finance: An $a$-admissible trading strategy is any $(\F_t)$-predictable process $(\theta_t)$ which is $(S_t)$-integrable such that the value process
\[V(x,\theta)_t:=x+\int_0^t\theta_sdS_s\]
satisfies $V(0,\theta)_t\geq -a$ $\p$-almost surely for all $t\geq0$ and the limit $\lim_{t\ra\infty}V(0,\theta)_t$ exists a.s.
A process $(\theta_t)$ is called an admissible trading strategy if it is an $a$-admissible trading strategy for some $a\in\R_+$. The notion of admissibility allows us to define two different no arbitrage concepts. 

\begin{df}\label{NFLVR}
In the market model $\left(S_t,\F_t,\p\right)$ there is 
\bi
\item an Arbitrage of the First Kind if and only if there exists a non-negative $\F$-measurable random variable $\xi$ with $\p(\xi>0)>0$ such that for all $a>0$ there exists an $a$-admissible trading strategy $\theta$ such that $V(a,\theta)_\infty\geq\xi$ almost surely. If there is no arbitrage of the first kind, we say that the market satisfies the NA1 (No Arbitrage of the First Kind) condition.
\item a Free Lunch with Vanishing Risk (FLVR) if and only if there exists an $\eps>0$ and a sequence $(\theta^n)$ of $(\F_t)$-admissible strategies together with an increasing sequence $(\delta_n)$ of positive numbers converging to one such that $\p(V(0,\theta^n)_\infty>-1+\delta_n)=1$ and $\p(V(0,\theta^n)_\infty>\eps)\geq\eps$. Otherwise we say that the market satisfies the NFLVR (No Free Lunch with Vanishing Risk) condition.
\ei
\end{df}

{\bf Throughout this section we will suppose that the market model $(S_t,\F_t,\p)$  satisfies NFLVR.} A natural question is now, if the market is still arbitrage free after adding new information by enlarging the filtration progressively with a random time $\sigma$. We will again suppose that $\sigma$ avoids $(\F_t)$-stopping times, i.e.~assumption (A) is satisfied.\vskip6pt

It was shown in \cite{Kardaras} that NA1 fails, if $S$ is not a semimartingale. Moreover, according to Theorem 7.2 in \cite{NFLVR} there exists a free lunch with vanishing risk using only simple trading strategies, if $S$ is not a semimartingale. Since in general under a progressive enlargement of filtration $S$ only remains a semimartingale until time $\sigma$, we will in the following restrict ourselves to the question whether the market $\left(S_{t\wedge\sigma},\G_{t\wedge\sigma},\p\right)$ is arbitrage-free. In the case where $\sigma$ is an honest time this question has been discussed in detail by \cite{honest}. Note also that the question of the existence of an equivalent local martingale measure on the whole time horizon $[0,\infty)$ has previously been addressed in \cite{CJN}, where its connection to the so called $(\mathcal{H})$-hypothesis has been pointed out.

\subs{NFLVR on $[0,\sigma\wedge T]$}

The following theorem gives a necessary criterion to have NFLVR on the time horizon $[0,T\wedge\sigma]$, where $T$ is an $(\F_t)$-stopping time. In the case of $\sigma$ being an honest time the following statement can be found in \cite{honest} together with a long technical proof. However, we will give an apparently new proof of the statement, valid for all random times that avoid stopping times, which appeals to purely intuitive reasoning. For this we work directly with the above definition of NFLVR.

\begin{thm}\label{V}
Let $T$ be an $(\F_t)$-stopping time. If $\p\left(T_0^{Z^\p}\leq T\right)=0$, then NFLVR also holds in the enlarged financial market on the time horizon $[0,\sigma\wedge T]$.
\end{thm}

The idea of the proof is that even at time $T$ we cannot be sure that $\sigma$ has already occured because $\p\left(T_0^{Z^\p}\leq T\right)=0$.

\begin{proof}
First note that
\[ \p\left(T_0^{Z^\p}\leq T\right)=0\quad\LRA\quad Z^\p_T>0\quad \p\text{-a.s.}\] 
We proceed by contradiction: Assume that there is a FLVR in the enlarged market on the time horizon $[0,\sigma\wedge T]$. Then there exists a sequence of $(\G_t)$-admissible trading strategies $(\theta^n)_{n\in\N}$ and an increasing deterministic sequence $(\delta_n)$ converging towards $1$ such that for some $\eps>0$ and all $n\in\N$,
\[\p\left(V(0,\theta^n)_{\sigma\wedge T}>-1+\delta_n\right)=1,\quad \p\left(V(0,\theta^n)_{\sigma\wedge T}>\eps\right)\geq\eps.\]
Using Lemma \ref{pred} of the appendix we can find for every $n\in\N$ an $(\F_t)$-predictable process $(y_t^n)$ such that $\theta^n$ and $y^n$ agree almost surely up to time $\sigma$, i.e.
\[\theta^n_t\1_{\{t\leq\sigma\}}=y_t^n\1_{\{t\leq \sigma\}}\quad\forall\ t\geq0.\]
First, we will show that each $y^n$ is $S$-integrable up to time $T$. For this let us denote by $S=S_0+M+B$ the $(\F_t)$-semimartingale decomposition of $S$, where $M\in \mathcal{M}_{loc}(\F_t)$ and $B$ is of finite variation. 
By the enlargement formula (\ref{ef}) there exists a local $(\G_t)$-martingale $\tilde{M}$ such that
\[M_{t\wedge\sigma}=\tilde{M}_{t}+\int_0^{t\wedge\sigma}\frac{d\langle M,m^\p\rangle_u}{Z_u^\p},\quad t\geq0.\]
Hence, the $(\G_t)$-semimartingale decomposition of $S$ up to time $\sigma$ is given by
\[S_{t\wedge\sigma}=S_0+\tilde{M}_t+\tilde{B}_{t\wedge\sigma}\quad\text{with}\quad \tilde{B}_t:=B_t+\int_0^{t\wedge\sigma}\frac{d\langle M,m^\p\rangle_u}{Z_u^\p}.\]
Because $S,M,$ and $\tilde{M}$ are continuous and hence also $B$ and $\tilde{B}$ are continuous, the quadratic variation processes of $S,M,$ and $\tilde{M}$ are almost surely equal and do not depend on the filtration. Since $\theta^n$ is admissible, it is $S$-integrable on $[0,\sigma\wedge T]$ and we have $\int_0^{\sigma\wedge T}(\theta^n_u)^2d\langle S\rangle_u<\infty$ as well as $\int_0^{\sigma\wedge T}|\theta^n_u||d\tilde{B}_u|<\infty$ almost surely. 
Moreover by the Kunita-Watanabe inequality,
\[\int_0^{\sigma\wedge T}|\theta^n_u|\frac{|d\langle M,m^\p\rangle_u|}{Z_u^\p}\leq \left(\int_0^{\sigma\wedge T}(\theta^n_u)^2d\langle S\rangle_u\right)^{1/2} \left(\int_0^{\sigma\wedge T}\frac{d\langle m^\p\rangle_u}{Z_u^\p}\right)^{1/2}<\infty \quad\text{a.s.},\]
because $m^\p$ is a uniformly integrable martingale and $T_0^{Z^\p}>\sigma$ almost surely, cf.~Lemme (4,3) of \cite{Jeulin}. Therefore, also $\int_0^{\sigma\wedge T}|\theta_u^n||dB_u|<\infty$ almost surely. Now observe that 
\begin{eqnarray*}
0&=&\p\left(\int_0^{\sigma\wedge T}|\theta^n_u||dB_u|=\infty\right)=
\p\left(\int_0^{\sigma\wedge T}|y^n_u||dB_u|=\infty\right)\\
&\geq&
\p\left(\sigma>T;\ \int_0^{T}|y^n_u||dB_u|=\infty\right)=
\E^\p\left(Z^\p_T\1_{\left\{\int_0^T|y^n_u||dB_u|=\infty\right\}}\right)
\end{eqnarray*}
and similarly
\begin{eqnarray*}
0&=&\p\left(\int_0^{\sigma\wedge T}\left(\theta^n_u\right)^2d\langle S\rangle_u=\infty\right)=
\p\left(\int_0^{\sigma\wedge T}\left(y^n_u\right)^2d\langle S\rangle_u=\infty\right)\\
&\geq&
\p\left(\sigma>T;\ \int_0^T\left(y^n_u\right)^2d\langle S\rangle_u=\infty\right)=
\E^\p\left(Z^\p_T\1_{\left\{\int_0^T(y^n_u)^2d\langle S\rangle_u=\infty\right\}}\right).
\end{eqnarray*}
Since $Z^\p_T>0$ a.s.~we conclude that $\int_0^T|y^n_u||dB_u|<\infty$ and $\int_0^T(y^n_u)^2d\langle S\rangle_u<\infty$ almost surely, i.e.~$y^n$ is $S$-integrable up to time $T$. 

Furthermore, 
since $\theta^n$ is admissible there exists $a_n\in\R_+$ such that for all $t\geq0$,
\[\p(V(0,\theta^n)_{\sigma\wedge t\wedge T}>-a_n)=1.\]
We will prove that also
\[\p(V(0,y^n)_{t\wedge T}>-a_n)=1\quad\forall\ t\geq0.\]
Assume that this was not the case, i.e.~there exists $t\geq0$ with
\[\p(V(0,y^n)_{t\wedge T}\leq -a_n)>0.\]
Since $Z^\p_T>0$ almost surely, this would imply that
\begin{eqnarray*}
0<\E^\p\left(\1_{\{V(0,y^n)_{t\wedge T}\leq -a_n\}}Z^\p_T\right)&=&\p(V(0,y^n)_{T\wedge t}\leq -a_n;\ \sigma>T)\\
&=&\p(V(0,\theta^n)_{\sigma\wedge t\wedge T}\leq -a_n;\ \sigma>T)\\
&\leq&\p(V(0,\theta^n)_{\sigma\wedge t\wedge T}\leq -a_n)=0,
\end{eqnarray*}
a contradiction. Thus, each $y^n$ is an admissible strategy for the $(S_t,\F_t,\p)$ market. Moreover, by the same reasoning as above one can show that 
\[\p(V(0,y^n)_T>-1+\delta_n)=1.\]
For every $n\in\N$ we define the $(\F_t)$-trading strategy
\[\vartheta^n_t:=y_t^n\1_{\{0\leq t\leq T^n_\eps\}},\]
where
\[T^n_\eps:=\inf\{t\geq0:\ V(0,y^n)_t=\eps\}.\]
Clearly, $\vartheta^n$ is admissible as well and
\[\p(V(0,\vartheta^n)_T>-1+\delta_n)\geq \p(V(0,y^n)_T>-1+\delta_n)=1.\]
Moreover, 
\begin{eqnarray*}
\p\left(V(0,\vartheta^n)_T>\frac{\eps}{2}\right)&\geq&
\p\left(T^n_\eps\leq T\right)=\p\left(\exists\ u\leq T:\ V(0,y^n)_u\geq\eps\right)\\
&\geq&\p\left(\exists\ u\leq\sigma\wedge T:\ V(0,y^n)_u\geq\eps\right)\\
&=&\p\left(\exists\ u\leq\sigma\wedge T:\ V(0,\theta^n)_u\geq\eps\right)\\
&\geq&\p\left(V(0,\theta^n)_{\sigma\wedge T}\geq\eps\right)>\eps.
\end{eqnarray*}
Choosing $\tilde{\eps}:=\eps/2$, this would give a FLVR with respect to $(\F_t)$, which is impossible by assumption.
\end{proof}

\begin{rem}
In fact it is sufficient to require in the statement of Theorem \ref{V} that $\q\left(T_0^{Z^{\q}}\leq T\right)$ for some measure $\q\sim\p$. This follows directly from the statement of the theorem, but can also be seen as follows: since $\rho:=\frac{d\q}{d\p}>0$, we have
\[\{Z^\p>0\}=\{h>0\}=\{Z^{\q}>0\}\quad\RA\quad T_0^{Z^\p}=T_0^{Z^{\q}}\ \text{a.s.}\]
\end{rem}

\subs{Local martingale deflators and equivalent local martingale measures on $[0,\sigma\wedge T]$}

Instead of working directly with the definition of NFLVR, one can also make use of the fundamental theorem of asset pricing, cf.~Theorem \ref{na}, and look for the existence of the dual variables defined below. This approach will be used in what follows.

\begin{df}\label{dual}
In the market model $\left(S_t,\F_t,\p\right)$ we call
\bi
\item a strictly positive local $(\F_t)$-martingale $(L_t)$ with $L_0=1$ and $L_\infty>0$ a.s.~a local martingale deflator, if the process $(L_tS_t)$ is a local $(\F_t)$-martingale.
\item $\widetilde{\p}:=L_\infty.\p$ an equivalent local martingale measure (ELMM), if there exists a local martingale deflator $(L_t)$ which is a uniformly integrable \mbox{martingale closed by $L_\infty$.}
\ei
\end{df}

The proof of the following very important theorem can be found in \cite{NFLVR} and \cite{Kad}, noting that the proof in \cite{Kad} carries over to the infinite time horizon case. 

\begin{thm}\label{na}
In the financial market model $\left(S_t,\F_t,\p\right)$
\bi
\item the NA1 condition is equivalent to the existence of a local martingale deflator.
\item the NFLVR condition is equivalent to the existence of an ELMM. 
\ei
\end{thm}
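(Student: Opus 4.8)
The plan is to establish the two equivalences separately, along the lines of the two classical fundamental theorems of asset pricing in \cite{NFLVR} and \cite{Kardaras}; I will only indicate the architecture, since both proofs are long.

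\textbf{NFLVR $\Leftrightarrow$ ELMM.} The easy implication is that an ELMM $\widetilde\p=L_\infty.\p$ precludes FLVR. For admissible $\theta$ a short integration-by-parts computation (using that $LS$ and $L$ are local $(\p,\F_t)$-martingales) shows that $L\bigl(a+V(0,\theta)\bigr)$ is a nonnegative local $\p$-martingale, hence a supermartingale, so $\E^{\widetilde\p}[V(0,\theta)_\infty]\le 0$. Phrasing this through the convex cone $C:=\{g\in L^\infty:\ g\le V(0,\theta)_\infty\ \text{for some admissible}\ \theta\}$, one gets $\E^{\widetilde\p}[g]\le 0$ on the $\|\cdot\|_\infty$-closure $\overline C$; since $\widetilde\p\sim\p$ this forces $\overline C\cap L^\infty_+=\{0\}$, which is exactly NFLVR.

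For the converse one runs the Delbaen--Schachermayer programme. First, NFLVR forces $S$ to be a semimartingale, so that $C$ is well defined. The technical core is to prove that $C$ is \emph{weak-$*$ closed} in $L^\infty$; this is where admissibility — i.e.\ the uniform lower bounds $V(0,\theta)\ge-a$ on the stochastic integrals — is decisive, via delicate $L^0$-convergence/compactness results for stochastic integrals. Once $C$ is weak-$*$ closed with $C\cap L^\infty_+=\{0\}$, the Kreps--Yan (Hahn--Banach) separation theorem produces a probability $\q\ll\p$ with $\E^\q[g]\le 0$ for all $g\in C$; an exhaustion argument over such separating measures upgrades $\q$ to an \emph{equivalent} measure; and testing against $\pm\int_0^{\cdot\wedge T}\theta\,dS$ for bounded $\theta$ shows that $S$ becomes a local $\q$-martingale (for unbounded $S$ one first obtains only a $\sigma$-martingale measure in the sense of \cite{NFLVR}, but positivity of $S$ removes this subtlety here). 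The density process $L_t=\E^\p(d\q/d\p\,|\,\F_t)$ is then the desired uniformly integrable martingale deflator and $\q$ is an ELMM.

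\textbf{NA1 $\Leftrightarrow$ local martingale deflator.} If $(L_t)$ is a deflator, then for $1$-admissible $\theta$ the process $L\,V(1,\theta)$ is a nonnegative local martingale, hence a supermartingale, so $\E^\p[L_TV(1,\theta)_T]\le 1$ for every $T$, and Chebyshev's inequality gives
\[\p\bigl(V(1,\theta)_T\ge n\bigr)\le\p\bigl(L_TV(1,\theta)_T\ge n\delta\bigr)+\p(L_T\le\delta)\le\frac{1}{n\delta}+\p(L_T\le\delta).\]
Since $L_T>0$ a.s., choosing $\delta$ small and then $n$ large makes the right-hand side uniformly small in $\theta$, so $\{V(1,\theta)_T\}$ is bounded in $L^0$, i.e.\ NA1 (equivalently NUPBR) holds. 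For the converse I would invoke the numéraire portfolio of \cite{Kardaras}: under NA1 there is a strictly positive growth-optimal admissible wealth process $\widehat V$, and $1/\widehat V$ is an equivalent supermartingale deflator; a multiplicative (It\^o--Watanabe-type, cf.\ Section \ref{mult}) decomposition of this supermartingale together with a localisation extracts a strictly positive local $(\p,\F_t)$-martingale $L$ with $L_\infty>0$ and $LS$ a local martingale. Alternatively, Takaoka's reduction deflates $S$ to an auxiliary price process to which the NFLVR part just proved applies, and the resulting ELMM in the auxiliary market pulls back to a deflator for $S$.

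\textbf{Main obstacle.} The genuinely hard step is the weak-$*$ closedness of $C$ in the direction NFLVR $\Rightarrow$ ELMM — and, symmetrically, the construction of the numéraire portfolio (or the Takaoka reduction) in the direction NA1 $\Rightarrow$ deflator. Everything else is either soft functional analysis (Kreps--Yan, exhaustion) or elementary (super)martingale bookkeeping. As these arguments are carried out in full in \cite{NFLVR} and \cite{Kardaras}, we content ourselves with this outline.
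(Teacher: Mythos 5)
Your outline is correct and follows exactly the route the paper takes: the paper gives no proof of Theorem \ref{na} at all, simply citing Delbaen--Schachermayer \cite{NFLVR} for the NFLVR/ELMM equivalence and Kardaras \cite{Kardaras} for the NA1/deflator equivalence, which are precisely the arguments you sketch. Your easy-direction supermartingale and Chebyshev computations are sound, and the hard directions are correctly delegated to the weak-$*$ closedness of the cone $C$ and the num\'eraire-portfolio construction in those references.
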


In the following we will approach the question of NA1 / NFLVR up to a random time by looking for local martingale deflators / ELMMs in the enlarged filtration. Throughout we will denote by $\q=\rho.\p$ an ELMM for the $(S_t,\F_t,\p)$ market which exists due to Theorem \ref{na} because the market is assumed to satisfy NFLVR. As before we denote by $\rho_t:=\E^\p(\rho|\F_t),\ t\geq0$, its Radon-Nikoydm derivative with respect to $(\F_t)$. Moreover, we denote by $Z^\p=N^\p D^\p$ the It\^o-Watanabe decomposition of $Z^\p$, cf.~Remark \ref{nd}.\\

The following Lemma was proven in \cite{honest} in the case of honest times with $\q=\p$, where it was remarked that it also holds in greater generality. For completeness we provide a proof as well.

\begin{lem}\label{deflate}
The process $(\rho_{t\wedge\sigma}/N^{\p}_{t\wedge\sigma})_{t\geq0}$ is a local martingale deflator for $(S_{t\wedge\sigma})$ in the filtration $(\G_t)$, i.e.~$NA1$ holds with respect to $(\G_t)$ on the time horizon $[0,\sigma]$.
\end{lem}

\begin{proof}
First note that the process $(\rho_{t\wedge\sigma}/N^{\p}_{t\wedge\sigma})$ is well-defined, since $T_0^{Z^\p}>\sigma$ a.s. If $X\in\mathcal{M}_{loc}(\p,\F_t)$, then by the enlargement formula (\ref{ef2}) the processes 
\[\tilde{X}_t:=X_{t\wedge\sigma}-\int_0^{t\wedge\sigma}\frac{d\langle X,N^\p\rangle_s}{N^\p_s}\]
and
\[\tilde{N}^\p_t:=N^\p_{t\wedge\sigma}-\int_0^{t\wedge\sigma}\frac{d\langle N^\p\rangle_s}{N^\p_s}\]
are local $(\p,\G_t)$-martingales. With It\^o's formula we therefore have on $[0,\sigma]$,
\begin{eqnarray*}
d\left(\frac{X}{N^\p}\right)&=&\frac{dX}{N^\p}-\frac{X}{(N^\p)^2}dN^\p+\frac{X}{(N^\p)^3}d\langle N^\p\rangle-\frac{d\langle X,N^\p\rangle}{(N^\p)^2}\\
&=&\frac{X}{N^\p}\left(\frac{d\tilde{X}}{X}+\frac{d\langle X,N^\p\rangle}{XN^\p}-\frac{d\tilde{N}^\p}{N^\p}-\frac{d\langle N^\p\rangle}{(N^\p)^2}+\frac{d\langle N^\p\rangle}{(N^\p)^2}-\frac{d\langle X,N^\p\rangle}{XN^\p}\right)\\
&=&\frac{X}{N^\p}\left(\frac{d\tilde{X}}{X}-\frac{d\tilde{N}^\p}{N^\p}\right).
\end{eqnarray*}
Hence, $(X_{t\wedge\sigma}/N^\p_{t\wedge\sigma})\in\mathcal{M}_{loc}(\p,\G_t)$. Especially, taking $X=(\rho_t)$ yields that $(\rho_{t\wedge\sigma}/N^\p_{t\wedge\sigma})\in\mathcal{M}_{loc}(\p,\G_t)$. Since $N^\p$ is a non-negative local $(\p,\F_t)$-martingale, it does not explode. Therefore, $\p(N^\p_\sigma=\infty)=0$ and $\frac{\rho_\sigma}{N^\p_\sigma}>0$ a.s. Moreover, we may choose $X=(\rho_tS_t)$, which is local $(\p,\F_t)$-martingale because $(\rho_t)$ is a local martingale deflator in the $(S_t,\F_t,\p)$ market. This yields that 
\[\left(\frac{S_{t\wedge\sigma}\rho_{t\wedge\sigma}}{N^\p_{t\wedge\sigma}}\right)\in\mathcal{M}_{loc}(\p,\G_t)\]
and therefore $(\rho_{t\wedge\sigma}/N^{\p}_{t\wedge\sigma})$ is a local martingale deflator in the enlarged filtration on $[0,\sigma]$.
\end{proof}

\begin{rem}
The validity of NA1 in a progressively enlarged filtration has recently been proven to hold in much greater generality without assuming (AC), cf.~\cite{Acciaio,Anna1,Anna2}.
\end{rem}

In \cite{honest} it is shown that for honest times the condition $\p\left(T^{Z^\p}_0\leq T\right)=0$, which we derived in Theorem \ref{V}, is not only sufficient but also necessary to have NFLVR on $[0,T\wedge\sigma]$ in a complete market. However, the condition $\p\left(T^{Z^\p}_0\leq T\right)=0$ is not in general necessary, even in a complete market, as the following example shows.

\begin{ex}
Let $\sigma$ be a $\p$-pseudo-stopping time bounded by one. Then $1-Z^\p_1=A^\p_1=1$ and therefore $\p\left(T_0^{Z^\p}\leq 1\right)=1$. However, since $\sigma$ is a $\p$-pseudo-stopping time, $\E^\p\rho_\sigma=1$ and $N^\p\equiv1$. Therefore, $(\rho_{t\wedge\sigma})$ is a local martingale deflator with $\E^\p\rho_\sigma=1$ due to Lemma \ref{deflate} and thus a uniformly integrable martingale, which defines an ELMM in the $(S_{t\wedge\sigma},\G_{t\wedge\sigma},\p)$ market model. Hence, NFLVR holds in the enlarged market on the interval $[0,\sigma]=[0,\sigma\wedge 1]$.
\end{ex}

Next we prove a sufficient and necessary criterion such that $(\rho_{t\wedge\sigma}/N^\p_{t\wedge\sigma})$ is a uniformly integrable martingale on the time interval $[0,\sigma\wedge T]$, where $T$ is an $(\F_t)$-stopping time. As it turns out this criterion will be less restrictive than the condition $\p\left(T^{Z^\p}_0\leq T\right)=0$ derived in Theorem \ref{V}.

\begin{thm}\label{nfl}
Let $T$ be an $(\F_t)$-stopping time. Then,
\[\left(\frac{\rho_{t\wedge\sigma\wedge T}}{N^\p_{t\wedge\sigma\wedge T}}\right)\in\mathcal{M}_{u.i.}(\p,\G_t)\quad\LRA\quad\E^\p\left(D^\p_\infty\1_{\{T_0^{N^\p}\leq T\}}\right)=0.\]
\end{thm}

\begin{proof}
The local $(\G_t)$-martingale $(\rho_{t\wedge\sigma\wedge T}/N^\p_{t\wedge\sigma\wedge T})_{t\geq0}$ is a uniformly integrable martingale if and only if $\E^\p(\rho_{\sigma\wedge T}/N^\p_{\sigma\wedge T})=1$. Since $\sigma<T_0^{Z^\p}=T_0^{N^\p}\wedge T_0^{D^\p}$ almost surely, 
\begin{eqnarray*}
\E^\p\left(\frac{\rho_{\sigma\wedge T}}{N^\p_{\sigma\wedge T}}\right)&=&\E^\p\left(\1_{\left\{T_0^{Z^\p}>\sigma\right\}}\frac{\rho_{\sigma\wedge T}}{N^\p_{\sigma\wedge T}}\right)
=\E^\p\left(\int_0^{T_0^{Z^\p}}\frac{\rho_{u\wedge T}}{N^\p_{u\wedge T}}dA^\p_u\right)\\
&=&\E^\p\left(\int_0^{T_0^{Z^\p}\wedge T}\frac{\rho_u}{N^\p_u}dA^\p_u+\1_{\left\{T_0^{Z^\p}>T\right\}}\frac{\rho_T}{N^\p_T}\left(A^\p_{T_0^{Z^\p}}-A^\p_T\right)\right)\\
&=&\E^\p\left(-\int_0^{T_0^{Z^\p}\wedge T}\rho_udD^\p_u+\1_{\left\{T_0^{Z^\p}>T\right\}}\frac{\rho_T}{N^\p_T}\left(m^\p_{T_0^{Z^\p}}-A^\p_T\right)\right)\\
&=&\E^\q\left(1-D^\p_{T_0^{Z^\p}\wedge T}+\1_{\left\{T_0^{Z^\p}>T\right\}}\frac{Z^\p_T}{N_T^\p}\right)\\
&=&\E^\q\left(1-D^\p_{T_0^{Z^\p}\wedge T}+\1_{\left\{T_0^{Z^\p}>T\right\}}D^\p_T\right)=1-\E^\q\left(D^\p_{T_0^{Z^\p}}\1_{\left\{T_0^{Z^\p}\leq T\right\}}\right)\\
&=&1-\E^\q\left(D^\p_{T_0^{Z^\p}}\1_{\left\{T_0^{N^\p}\leq T\right\}}\right)=1-\E^\q\left(D^\p_\infty\1_{\left\{T_0^{N^\p}\leq T\right\}}\right),
\end{eqnarray*}
where in the last equality we used that $supp(dD^\p)\subset\{Z^\p>0\}$, cf.~Remark \ref{nd}. Finally note that
\[\E^\q\left(D^\p_\infty\1_{\{T_0^{N^\p}\leq T\}}\right)=0\ \LRA\ \E^\p\left(D^\p_\infty\1_{\{T_0^{N^\p}\leq T\}}\right)=0.\]
\end{proof}

\begin{rem}\label{honestn}
For an honest time $\sigma$ the multiplicative decomposition of $Z^\p$ is given by $Z^\p_t=N^\p_t/\ol{N}^\p_t$, where $N^\p$ is a non-negative local martingale converging to zero almost surely, cf.~Lemma \ref{Max}. And since a non-negative local martingale does not explode almost surely,
\[D^\p_\infty=\frac{1}{\ol{N}^\p_\infty}>0\quad\text{a.s.}\]
Therefore, the process $\left(\rho_{t\wedge\sigma\wedge T}/N^\p_{t\wedge\sigma\wedge T}\right)$ is a uniformly integrable martingale if and only if $\p\left(T_0^{N^\p}\leq T\right)=\p\left(T_0^{Z^\p}\leq T\right)=0$. Especially, if $T_0^{N^\p}=\infty$ almost surely, $(\rho_{\sigma\wedge t}/N^\p_{\sigma\wedge t})_{t\geq0}$ is actually a true martingale and not a strict local martingale, cf.~also Remark 3.6 in \cite{honest}. Note however that Theorem \ref{nfl} implies that $\left(\rho_{t\wedge\sigma}/N^\p_{t\wedge\sigma}\right)$ is \textit{never} a uniformly integrable martingale.
\end{rem}

We can now derive the result of Theorem \ref{V} as a Corollary of Theorem \ref{nfl}.

\begin{cor}\label{cord}
Let $T$ be an $(\F_t)$-stopping time. If $\p\left(T_0^{Z^\p}\leq T\right)=0$, then NFLVR holds in the enlarged market on the time interval $[0,T\wedge\sigma]$.
\end{cor}

\begin{proof}
If $\p\left(T_0^{Z^\p}\leq T\right)=0$, then 
\[\p\left(T\geq T_0^{N^\p}\right)=\p\left(T\geq T_0^{N^\p}\geq T_0^{Z^\p}\right)=0.\]
Hence, the claim follows from Theorem \ref{nfl}, Lemma \ref{deflate}, and Theorem \ref{na}.
\end{proof}

Moreover, taking $T=\infty$ in Theorem \ref{nfl} we get the following corollary.

\begin{cor}\label{dinfty}
If $D^\p_\infty=0$ almost surely, then NFLVR holds on the interval $[0,\sigma]$ with respect to the filtration $(\G_t)$.
\end{cor}

Of course, every pseudo-stopping time fulfills $D^\p_\infty=1-A^\p_\infty=1-1=0$. The following example, known as \'Emery's example, shows that there are also other random times which fulfill the assumption of Corollary \ref{dinfty} and thus allow for an equivalent local martingale measure up to time $\sigma$. 

\begin{ex}
Let $W$ be a $(\p,\F_t)$-Brownian motion and set $\sigma=\sup\{t\leq1:\ 2W_t=W_1\}$. The corresponding Az\'ema supermartingale is
\[Z^\p_t=\sqrt{\frac{2}{\pi}}\int_{\frac{|W_t|}{\sqrt{1-t}}}^\infty x^2e^{-x^2/2}dx=
m^\p_t-\sqrt{\frac{2}{\pi}}\int_0^t\frac{|W_u|}{(1-u)^{3/2}}\exp\left(-\frac{W_u^2}{2(1-u)}\right)du\]
with $m^\p\not\equiv1$, cf.~section 5.6.5 in \cite{MMM}. 
For every $n\in\N$ define the set 
\[B_n=\left\{|W_u|>\sqrt{\frac{2}{n}}\ \forall\ u\in\left[1-\frac{1}{n},1\right]\right\}\] 
and note that
\[1=\p(W_1\neq 0)=\lim_{n\ra\infty}\p(B_n).\]
On the set $B_n$ we have for all $u\in\left[1-\frac{1}{n},1\right]$,
\[\frac{|W_u|}{\sqrt{1-u}}>\sqrt{2}\]
and hence
\[\frac{1}{2}\int_{\frac{|W_u|}{\sqrt{1-u}}}^\infty x^2e^{-x^2/2}dx\leq\int_{\frac{|W_u|}{\sqrt{1-u}}}^\infty(x^2-1)e^{-x^2/2}dx=\frac{|W_u|}{\sqrt{1-u}}\exp\left(-\frac{W_u^2}{2(1-u)}\right).\]
Thus, the following estimate holds on $B_n$:
\begin{eqnarray*}
\int_0^1\frac{dA^\p_t}{Z^\p_t}&\geq&\int_{1-\frac{1}{n}}^1\frac{dA^\p_t}{Z^\p_t}
=\int_{1-\frac{1}{n}}^1\frac{dA^\p_t}{\sqrt{\frac{2}{\pi}}\int_{\frac{|W_t|}{\sqrt{1-t}}}^\infty x^2e^{-x^2/2}dx}\\
&\geq&\frac{1}{2}\int_{1-\frac{1}{n}}^1\frac{dA^\p_t}{\sqrt{\frac{2}{\pi}}\frac{|W_t|}{\sqrt{1-t}}\exp\left(-\frac{W_t^2}{2(1-t)}\right)}
=\frac{1}{2}\int_{1-\frac{1}{n}}^1\frac{dt}{1-t}=\infty.
\end{eqnarray*}
Therefore, on each $B_n$ we have
\[D^\p_\infty=D^\p_1=\exp\left(-\int^1_0\frac{dA^\p_t}{Z^\p_t}\right)=0,\]  
and by monotone convergence 
\[\E^\p\left(D^\p_\infty\right)=\lim_{n\ra\infty}\E^\p\left(D^\p_\infty\1_{B_n}\right)=0\quad\LRA\quad D^\p_\infty=0\quad \p\text{-a.s.}\]
\end{ex}

\appendix

\s{}

\subs{A useful lemma}

The following well-known Lemma can for example be found in paragraph XX.75 of \cite{DM5}.

\begin{lem}\label{pred}
\mbox{}
\be
\item If $G$ is a $(\G_t)$-predictable process, then there exists an $(\F_t)$-predictable process $F$ such that 
\[G_t\1_{\{t\leq\sigma\}}=F_t\1_{\{t\leq\sigma\}},\quad t\geq0.\]
\item If $\xi$ is a $\p$-integrable variable, then
\[\E^\p(\xi\1_{\{\sigma>t\}}|\G_t)=\1_{\{\sigma>t\}}\frac{\E^\p(\xi\1_{\{\sigma>t\}}|\F_t)}{Z_t^\p}.\]
\item If $T$ is a $(\G_t)$-stopping time, then there exists an $(\F_t)$-stopping time $S$ such that
\[T\wedge\sigma=S\wedge\sigma.\]
\ee
\end{lem}

\subs{Condition $(P)$}\label{AppP}

The following condition goes back to Parthasarathy, cf.~\cite{para}, and was labeled condition $(P)$ in \cite{newkind}.

\begin{df}
Let $(\Omega, \F, (\F_t )_{t\geq0} )$ be a filtered measurable space, such that $\F$  is the $\sigma$-
algebra generated by $(\F_t)_{t\geq0}$:  $\F = \bigvee_{t\geq0}\F_t$.
We shall say that the property $(P)$ holds if
and only if $(\F_t )_{t\geq0}$ enjoys the following conditions:
\bi
\item For all $t \geq 0$, $\F_t$ is generated by a countable number of sets.
\item For all $t \geq 0$, there exists a Polish space $\Omega_t$, and a surjective map $\pi_t$ from $\Omega$ to $\Omega_t$,
such that $\F_t$ is the $\sigma$-algebra of the inverse images by $\pi_t$ of Borel sets in $\Omega_t$, and
such that for all $B\in\F_t$ , $\omega\in\Omega$, $\pi_t(\omega)\in\pi_t (B)$ implies $\omega\in B$.
\item If $(\omega_n )_{ n\geq0}$ is a sequence of elements of $\Omega$ such that for all $N\geq0$,
	\[\bigcap_{n\geq0}^N A_n (\omega_n ) \neq\emptyset,
\]
where $A_n (\omega_n )$ is the intersection of the sets in $\F_n$ containing $\omega_n$, then
	\[\bigcap_{n\geq0}^\infty A_n (\omega_n ) \neq\emptyset.\]
\ei
\end{df}

\end{document}